\newtheorem{theorem}{Theorem}
\newtheorem{corollary}{Corollary}
\newtheorem{lemma}{Lemma}
\newtheorem*{corollary*}{Corollary}
\newtheorem{proposition}{Proposition}
\newcommand{\ve}{{\varepsilon}}
\newcommand{\Var}{\mathrm{Var}}
\newcommand{\pabbrev}{q_{j,+}^k(t-\Delta\ell)}
\newcommand{\pabbrevvv}{q_{j,+}^k(t-\Delta\ell_2)}
\newcommand{\E}{\mathbb{E}}
\def\beq{ \begin{equation} }
\def\eeq{ \end{equation} }
\def\square{\vcenter{\vbox{\hrule height .4pt
  \hbox{\vrule width .4pt height 5pt \kern 5pt
        \vrule width .4pt} \hrule height .4pt}}}
\def\sqz{\kern-0.2em}
\newcommand{\inquotes}[1]{``#1''}
\title{
Limit theorems for the site frequency spectrum \\ of neutral mutations in an \\ exponentially growing population
}
\author{Einar Bjarki Gunnarsson$^{1,2,3}$ \and\hspace*{-6pt}  Kevin Leder$^{1}$ \and\hspace*{-6pt}Xuanming Zhang$^{1}$}
\date{%
    \footnotesize     $^1$Department of Industrial and Systems Engineering, University of Minnesota, Twin Cities, Minneapolis, MN, USA. \\[3pt]
    $^2$School of Mathematics, University of Minnesota, Twin Cities, Minneapolis, MN, USA. \\[2pt]
    $^3$Applied Mathematics Division, Science Institute, University of Iceland, Reykjavík, Iceland. \\[2pt]
}
\begin{document}

\maketitle
\begin{abstract}
    
The site frequency spectrum (SFS) is a widely used summary statistic of genomic data. Motivated by recent evidence for the role of neutral evolution in cancer, we investigate the SFS of neutral mutations in an exponentially growing population. Using branching process techniques, we establish (first-order) almost sure convergence results for the SFS of a Galton-Watson process, evaluated either at a fixed time or at the stochastic time at which the population first reaches a certain size. We finally use our results to construct consistent estimators for the extinction probability and the effective mutation rate of a birth-death process.
\\

\noindent {\bf Keywords:} Site frequency spectrum; Neutral evolution; Infinite sites model; Branching processes; Convergence of stochastic processes.\\

\noindent {\bf MSC2020 Classification:} 60J85, 60F15, 92D25, 92B05.
\end{abstract}
\section{Introduction}

The site frequency spectrum (SFS) is a popular summary statistic of genomic data,
recording the frequencies of mutations within a given population or population sample.
For the case of a large constant-sized population and selectively neutral mutations,
the expected value of the SFS has given rise to several estimators of the rate of mutation accumulation within the population,
and these estimators have formed the basis of many
statistical tests of neutral evolution vs.~evolution under selection
\cite{zeng2006statistical,achaz2009frequency}.
In this way, the SFS has provided a simple means
of understanding the rate and mode
of evolution 
in a population
using genomic data.

Motivated by the uncontrolled growth of cancer cell populations,
and the mounting evidence for the role of neutral evolution in cancer \cite{sottoriva2015big,ling2015extremely,williams2016identification,venkatesan2016tumor,davis2017tumor}, 
several authors have recently studied the SFS of neutral mutations
in an exponentially growing population.
Durrett \cite{durrett2013population,durrett2015branching} considered a supercritical birth-death process, in which cells live for an exponentially distributed time and then divide or die.
He showed that in the large-time limit, the expected number of mutations found at a frequency $\geq f$ amongst cells with infinite lineage follows a $1/f$ power law with $0<f<1$.
Similar results were obtained by Bozic et al.~\cite{bozic2016quantifying} and in a deterministic setting by Williams et al.~\cite{williams2016identification}.
In the aforementioned work, Durrett also derived an approximation for the expected SFS of
a small random sample taken from the population  \cite{durrett2013population,durrett2015branching}.
Further small sample results have been derived using both branching process and coalescence techniques and they have been compared with Durrett's result in
\cite{ohtsuki2017forward,dinh2020statistical}.
In \cite{gunnarsson2021exact}, we derived exact expressions for the SFS of neutral mutations in a supercritical birth-death process, both for cells with infinite lineage and for the total cell population, evaluated either at a fixed time (fixed-time SFS) or at the stochastic time at which the population first reaches a given size (fixed-size SFS). 
More recently, Morison et al.~analyzed the SFS, single-cell division distributions and mutational burden distributions in a supercritical birth-death process \cite{morison2023single}.
The effect of selective mutations on the expected SFS has been investigated by Tung and Durrett \cite{tung2021signatures} and Bonnet and Leman \cite{bonnet2023site}.
The latter work considers the setting of a drug-sensitive tumor which decays exponentially under treatment, with cells randomly acquiring resistance
which enables them to grow exponentially under treatment.

Whereas the aforementioned works have focused on the mean behavior of the SFS,
here,
we are interested in the asymptotic behavior of the underlying stochastic process.
Using the framework of coalescent point processes, Lambert \cite{lambert2009allelic} derived a strong law of large numbers for the SFS of neutral mutations in a population sample, ranked in such a way that coalescence times among consecutive individuals are i.i.d.
Later works by Lambert \cite{lambert2018coalescent}, Johnston \cite{johnston2019genealogy} and Harris et al.~\cite{harris2020coalescent} characterized the joint distribution of coalescence times for a uniformly drawn sample 
from a continuous-time Galton-Watson process. 
Building on these works, 
Johnson et al.~\cite{johnson2023estimating} 
derived limit distributions for the total lengths of 
internal and external branches 
in the genealogical tree of a birth-death process.
Schweinsberg and Shuai \cite{schweinsberg2023asymptotics} 
extended this analysis to  branches supporting exactly $k$ leaves,
which under a constant mutation rate characterizes the SFS of a uniformly drawn sample.
For a supercritical birth-death process,
the authors established both a weak law of large numbers and the asymptotic normality of branch lengths 
in the limit of a large sample,
assuming that the sample is sufficiently small compared to the
expected population size at the sampling time.

In this work, instead of considering a sample from the population using coalescence techniques,
we investigate the first-order asymptotics for the SFS of the total population using branching process techniques.
We establish results both for the fixed-time and fixed-size SFS
under the infinite sites model of mutation, where each new mutation is assumed to be unique \cite{durrett2008probability}.
Besides having theoretical value, our results can be applicable to the setting where an entire subclone of cells within a tumor is sampled, as opposed to cells being sampled randomly across the tumor \cite{gunnarsson2021exact}.
Our results can also potentially be applied to an {\em in vitro} setting where a single cell is expanded in 2D or 3D culture to a miniaturized version of a tumor.
Cheek and Antal recently studied a finite sites model in \cite{cheek2020genetic} (see also \cite{cheek2018mutation}),
where each genetic site is 
allowed to mutate back and forth between the four nucleotides $A,C,G,T$.
With the understanding that a site is mutated if its nucleotide differs from the nucleotide of the initial individual, the authors investigated the SFS of a birth-death process stopped at a certain size,
both for mutations observed in a certain number and in a certain fraction of individuals.
They used a limiting regime where the population size is sent to infinity, mutation rate is sent to 0, and the number of genetic sites is sent to infinity.
In contrast, we will assume a constant mutation rate under the infinite sites model (with no back mutations), 
and send either the fixed time or the fixed size at which the population is observed to infinity.

Our results are derived for a supercritical Galton-Watson process in continuous time, where each individual acquires neutral mutations at a constant rate $\nu>0$.
Let $Z(t)$ denote the size of the population at time $t \geq 0$,
$\lambda>0$ denote the net growth rate of the population,
$\tau_N$ denote the time at which the population first reaches size $N$,
and $S_j(t)$ denote the number of mutations found in $j \geq 1$ individuals at time $t$.
Our main result, Theorem \ref{thm:mainresult}, characterizes the first-order behavior of $e^{-\lambda t}S_j(t)$ as $t \to \infty$ (fixed-time result) and $N^{-1}S_j(\tau_N)$ as $N \to \infty$ (fixed-size result).
To prove the fixed-time result, the key idea is to decompose $(S_j(t))_{t \geq 0}$ into a difference of two increasing processes $(S_{j,+}(t))_{t \geq 0}$ and $(S_{j,-}(t))_{t \geq 0}$.
These processes count the total number of instances that a mutation reaches and leaves frequency $j$, respectively, up until time $t$.
Using the limiting behavior of $Z(t)$ as $t \to \infty$, we construct large-time approximations for the two processes $(S_{j,+}(t))_{t \geq 0}$ and $(S_{j,-}(t))_{t \geq 0}$.
We then establish exponential $L^1$ error bounds on these approximations,
which imply convergence in probability.
Finally, by adapting an argument of Harris (Theorem 21.1 of \cite{harris1964theory}), we use the exponential error bounds and the fact that $(S_{j,+}(t))_{t \geq 0}$ and $(S_{j,-}(t))_{t \geq 0}$ are increasing processes to show that $e^{-\lambda t}S_{j,+}(t)$ and $e^{-\lambda t} S_{j,-}(t)$ converge almost surely to their approximations. This in turn gives almost sure convergence for $e^{-\lambda t}S_j(t)$ as $t \to \infty$.
The fixed-size result is obtained by combining the fixed-time result with an approximation result for $\tau_N$, given by Proposition \ref{thm:passageApprox}.
Finally, we establish analogous fixed-time and fixed-size convergence results for $M(t) =  \sum_{j=1}^\infty S_j(t)$, the total number of mutations present at time $t$, in Proposition \ref{thm:totalnummutresult}.
All results are given conditional on nonextinction of the population.

The rest of the paper is organized as follows. Section 2 introduces our branching process model and establishes the relevant notation. Section 3 presents our results, including explicit expressions for the birth-death process. Section 4 outlines the proof of the main result, Theorem \ref{thm:mainresult}. 
Section 5 constructs consistent estimators for the extinction probability and effective mutation rate of the birth-death process. Finally, the proofs of the remaining results can be found in Section 6.

\section{Model}

\subsection{Branching process model with neutral mutations}

We consider a Galton-Watson branching process $(Z(t))_{t \geq 0}$, started with a single individual at time 0, $Z(0)=1$, where the lifetimes of individuals are exponentially distributed with mean $1/a>0$.
At the end of an individual's lifetime, it produces offspring according to the distribution
$(u_k)_{k \geq 0}$, where $u_k$ is the probability that $k$ offspring are produced.
We define $m := \sum_{k=0}^\infty ku_k$ as the mean number of offspring per death event
and assume that the offspring distribution has a finite third moment, 
$\sum_{k=0}^\infty k^3u_k<\infty$.
Each individual, over its lifetime,
accumulates neutral mutations at (exponential) rate $\nu>0$.
We assume the infinite sites model of mutation,
where each new mutation is assumed to be unique.
Throughout, we consider the case $m>1$ of a supercritical process.
The net growth rate of the population is then $\lambda=a(m-1)>0$, with $E[Z(t)]=e^{\lambda t}$ for $t \geq 0$. 

We will be primarily interested in analyzing the process conditional on long-term survival of the population.
We define the event of nonextinction of the population as
\begin{align*}
    \Omega_\infty := \{Z(t)>0 \;\text{for all $t > 0$}\}.
\end{align*}
We also define the probability of eventual extinction as 
\begin{align} \label{eq:extinctionprobdef}
    p := P(\Omega_\infty^c) = P(Z(t)=0 \;\text{for some $t>0$}\},
\end{align} 
and the corresponding survival probability as $q := P(\Omega_\infty)$.
For $N \geq 1$, we define $\tau_N$ as the time at which the population first reaches size $N$,
\begin{align} \label{eq:tau_N_def}
    \tau_N := \inf\{t\geq0: Z(t) \geq N\},
\end{align}
with the convention that $\inf \varnothing = \infty$.
Note that on $\Omega_\infty$,
$\tau_N<\infty$ almost surely.
Also note that if $u_k>0$ for some $k>2$, it is possible that $Z(\tau_N)>N$.
We finally define
\begin{align*}
    p_{i,j}(t) := P(Z(t)=j|Z(0)=i)
\end{align*}
as the probability of transitioning from $i$ to $j$ individuals in $t$ time units.
For the baseline case $Z(0)=1$, we simplify the notation to $p_j(t) := p_{1,j}(t)$.

\subsection{Special case: Birth-death process}

An important special case is that of the birth-death process,
where $u_2 > u_0 \geq 0$ and $u_0+u_2=1$.
In this process, an individual at the end of its lifetime either dies without producing offspring or produces two offspring.
At each death event, the population therefore either reduces or increases in size by one individual.
The birth-death process is for example relevant to the population dynamics of cancer cell populations (tumors) and bacteria.
In this case, the probability of eventual extinction can be computed explicitly as $p = u_0/u_2$ and the survival probability as $q = 1-u_0/u_2$ \cite{durrett2015branching}.
Furthermore, the probability mass function $j \mapsto p_j(t)$ has an explicit expression for each $t \geq 0$, given by expression \eqref{eq:sizedisgeneral} in Section \ref{app:bdsimplification}.
This will enable us to derive explicit limits for the site frequency spectrum of the birth-death process, see Corollary \ref{corollary} in Section \ref{sec:mainresultbd}.

\subsection{Asymptotic behavior} \label{sec:asymptoticbehavior}

We note that $(e^{-\lambda t}Z(t))_{t \geq 0}$ is a nonnegative martingale with respect to the natural filtration 
$\mathcal{F}_t := \sigma(Z(s); s\leq t)$.
Thus, 
there exists a random variable $Y$ such that $e^{-\lambda t}Z(t)\to Y$ almost surely $t\to\infty$.  
By Theorem 2 in Section III.7 of \cite{athreya2004branching},
\begin{align} \label{eq:Ydistribution}
    Y\stackrel{\mathcal{D}}{=}p\delta_0+q\xi,
\end{align}
where $p$ and $q$ are the extinction and survival probabilities of the population,
respectively,
$\delta_0$ is a point mass at 0,
and $\xi$ is a random variable on $(0,\infty)$ with a strictly positive continuous density function and mean $1/q$.
Since we assume that the offspring distribution has a finite third moment we know that $E[(Z(t))^2] = O(e^{2\lambda t})$ by Chapter III.4 of \cite{athreya2004branching} or Lemma 5 of \cite{foo2014escape}, hence
$(e^{-\lambda t}Z(t))_{t \geq 0}$ is uniformly integrable
and
$E[Y|\mathcal{F}_t]=e^{-\lambda t}Z(t)$. 

Based on the large-time approximation $Z(t) \approx Y e^{\lambda t}$, 
for $N \geq 1$, we define an approximation to the hitting time $\tau_N$ 
defined in \eqref{eq:tau_N_def} 
as follows:
\begin{align} \label{eq:tNdef}
    t_N := \inf\{t\geq0: Ye^{\lambda t} = N\},
\end{align}
with the understanding that $t_N = \infty$ if $Y=0$.
In Proposition \ref{thm:passageApprox}, we show that conditional on $\Omega_\infty$, $\tau_N-t_N \to 0$ almost surely as $N \to \infty$.

\subsection{Site frequency spectrum}

In the model, each individual accumulates neutral mutations at rate $\nu>0$.
For $t>0$, enumerate the mutations that occur up until time $t$ as $1,\ldots,N_t$,
and define ${\cal M}_t := \{1,\ldots,N_t\}$ as
the set of mutations generated up until time $t$.
For $i \in {\cal M}_t$ and $s \leq t$, let $C^i(s)$ denote the number of individuals at time $s$ that carry mutation $i$, with $C^i(s) = 0$ before mutation $i$ occurs.
The number of mutations present in $j$ individuals at time $t$ is then given by
$$
S_j(t) :=\sum_{i\in\mathcal{M}_t}1_{\{C^i(t)=j\}}.
$$
The vector $(S_j(t))_{j \geq 1}$ is the site frequency spectrum (SFS) of neutral mutations at time $t$. 
We also define the total number of mutations present at time $t$ as
\begin{align*} 
M(t) := \sum_{j=1}^\infty S_j(t).
\end{align*}
The goal of this work is to establish first-order limit theorems for $S_j(t)$ and $M(t)$, evaluated either at the fixed time $t$ as $t \to \infty$ or at the random time $\tau_N$ as $N \to \infty$.

\section{Results}

\subsection{General case}

Our main result, Theorem \ref{thm:mainresult}, provides large-time and large-size 
first-order asymptotics for the SFS of neutral mutations conditional on nonextinction.
A proof sketch is given in Section \ref{sec:proofofmainresult} and the proof details are carried out in Sections \ref{app:lemma1}--\ref{app:lemma4}.

\begin{theorem} \label{thm:mainresult}
\begin{enumerate}[(1)]
    \item Conditional on $\Omega_\infty$, 
    \begin{align} \label{eq:fixedtimemainresult}
        \lim_{t \to \infty} e^{-\lambda t} S_j(t) = \nu Y \int_0^\infty e^{-\lambda s} p_j(s) ds, \quad j \geq 1,
    \end{align}
    almost surely. Equivalently, with $r_N := (1/\lambda)\log(q N)$, $X := qY$ and $\E[X|\Omega_\infty]=1$,
    \begin{align} \label{eq:fixedtimemainresultalternative}
        \lim_{N \to \infty} N^{-1} S_j(r_N) = \nu  X \int_0^\infty e^{-\lambda s} p_j(s) ds, \quad j \geq 1,
    \end{align}
    almost surely.
    \item  
    Conditional on $\Omega_\infty$,
    \begin{align} \label{eq:fixedsizemainresult}
        \lim_{N \to \infty} N^{-1} S_j(\tau_N) = \nu \int_0^\infty e^{-\lambda s} p_j(s) ds, \quad j \geq 1,
    \end{align}
    almost surely.
\end{enumerate}
\end{theorem}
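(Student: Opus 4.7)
The plan for part (1) is to decompose $S_j(t) = S_{j,+}(t) - S_{j,-}(t)$, where $S_{j,+}(t)$ is the total count up to time $t$ of upward transitions through size $j$ among all mutation subclones, and $S_{j,-}(t)$ is the analogous count of downward transitions. Both $S_{j,\pm}$ are non-decreasing in $t$; this monotonicity is what allows us to upgrade $L^1$ to almost sure convergence. Using the branching property, a mutation arising at time $s$ initiates an independent Galton-Watson subclone whose size at time $t$ has distribution $p_j(t-s)$. Combined with the mutation intensity $\nu Z(s)$ and the martingale approximation $Z(s) \approx Y e^{\lambda s}$, this suggests candidate limits $e^{-\lambda t} S_{j,\pm}(t) \to Y g_\pm(j)$ with $g_+(j) - g_-(j) = \nu \int_0^\infty e^{-\lambda s} p_j(s)\, ds$.

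The quantitative core is to prove the exponential $L^1$ bound
\begin{equation*}
\E\bigl|\, e^{-\lambda t} S_{j,\pm}(t) - Y g_\pm(j) \,\bigr| \leq C e^{-\delta t}
\end{equation*}
for some $\delta > 0$. This needs $L^2$ control of $e^{-\lambda t} Z(t) - Y$ (supplied by the finite third moment hypothesis, which gives $\E[Z(t)^2] = O(e^{2\lambda t})$) together with second-moment bookkeeping on the doubly stochastic counts $S_{j,\pm}$: the error decomposes into a conditionally-Poisson fluctuation given $Z(\cdot)$ and the fluctuation of $e^{-\lambda s} Z(s)$ about $Y$, each of which must be estimated to exponential precision. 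With the exponential bound in hand, Borel-Cantelli gives almost sure convergence along any geometric sequence $t_k = k\alpha$. To interpolate to continuous $t$, I would adapt Harris's argument (Theorem 21.1 of \cite{harris1964theory}): on $[t_k, t_{k+1}]$ the monotonicity of $S_{j,\pm}$ yields a two-sided sandwich whose multiplicative discrepancy $e^{\pm \lambda \alpha}$ can be made arbitrarily close to $1$ by shrinking $\alpha$. Taking the difference of the $\pm$ limits proves \eqref{eq:fixedtimemainresult}; the equivalent \eqref{eq:fixedtimemainresultalternative} is the algebraic relabeling under $t = r_N = (1/\lambda)\log(qN)$ with $X := qY$.

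For part (2), I would replace $\tau_N$ by $t_N = (1/\lambda) \log(N/Y)$ using Proposition \ref{thm:passageApprox}, which supplies $\tau_N - t_N \to 0$ almost surely on $\Omega_\infty$. Since $e^{-\lambda t_N} = Y/N$,
\begin{equation*}
N^{-1} S_j(\tau_N) = Y^{-1} \cdot e^{-\lambda(t_N - \tau_N)} \cdot e^{-\lambda \tau_N} S_j(\tau_N),
\end{equation*}
so the $Y^{-1}$ cancels the $Y$ in the fixed-time limit and $e^{-\lambda(t_N - \tau_N)} \to 1$. Passing the conclusion of (1) to the random time $\tau_N \to \infty$ is justified by another monotonicity sandwich, pinching $S_j(\tau_N)$ between $S_j(t_N \pm \epsilon)$ via the non-decreasing processes $S_{j,\pm}$; combined with the fixed-time limit this yields \eqref{eq:fixedsizemainresult}.

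The main obstacle is the exponential $L^1$ estimate for $S_{j,\pm}$: these counts are driven by two superimposed layers of randomness---the parent process $Z(\cdot)$ producing mutations at intensity $\nu Z(s)$, and the independent subclones whose evolution determines the transitions through size $j$---and bounding the deviation from the deterministic-in-$Y$ approximation at exponential (rather than merely polynomial) precision requires careful second-moment estimates on both layers and on their coupling through the convolution $\nu \int_0^t Z(s)\, p_j(t-s)\, ds$. It is this exponential rate, and nothing weaker, that makes the Harris-type upgrade from convergence in probability to almost sure convergence go through.
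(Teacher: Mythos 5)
Your proposal follows essentially the same route as the paper: the same decomposition of $S_j$ into the increasing up/down-crossing counts $S_{j,\pm}$, the same two-layer approximation (conditionally Poisson mutation counts given $Z(\cdot)$, then $Z(s)\approx Ye^{\lambda s}$) yielding an $L^1$ error of order $t^{1/2}e^{\lambda t/2}$, the same Harris-type monotonicity upgrade to almost sure convergence, and the same reduction of the fixed-size statement to the fixed-time one via $\tau_N-t_N\to 0$. The only ingredient left implicit in your sketch is the uniform geometric bound $P(\text{a clone enters size } j \text{ at least } k \text{ times})\le C\theta^k$, which the paper needs so that the repeated-crossing counts $S_{j,\pm}$ have finite normalized limits and the per-crossing error bounds are summable in $k$.
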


\begin{proof}
    Section \ref{sec:proofofmainresult} and Sections \ref{app:lemma1}--\ref{app:lemma4}.
\end{proof}

The main difference between the fixed-time result \eqref{eq:fixedtimemainresult} 
and the fixed-size result \eqref{eq:fixedsizemainresult} is that the limit in \eqref{eq:fixedtimemainresult} is a random variable while it is constant in \eqref{eq:fixedsizemainresult}. 
The reason is that the population size at a large, fixed time $t$ 
is dependent on the limiting random variable $Y$ in $ e^{-\lambda t} Z(t) \to Y$,
while the population size at time $\tau_N$ is always approximately $N$.
In expression \eqref{eq:fixedtimemainresultalternative},
the fixed-time result is viewed at the time $r_N$
defined so that
\begin{align*}
    \lim_{N \to \infty} N^{-1} E[Z(r_N)|\Omega_\infty] = 1.
\end{align*}
The point is to show that 
when the result in \eqref{eq:fixedtimemainresult} is viewed at a fixed time
comparable to $\tau_N$,
the mean of the limiting random variable 
becomes equal to the 
fixed-size limit in \eqref{eq:fixedsizemainresult}.

To establish
the fixed-size result \eqref{eq:fixedsizemainresult}, we prove a secondary approximation result for the hitting time $\tau_N$ defined in \eqref{eq:tau_N_def}.
The result, stated as Proposition \ref{thm:passageApprox}, shows that conditional on $\Omega_\infty$, $\tau_N$ is equal to the approximation $t_N$ defined in \eqref{eq:tNdef} up to an $O(1)$ error.

\begin{proposition} \label{thm:passageApprox}
Conditional on $\Omega_\infty$,
    \begin{align} \label{eq:passageApprox}
        \lim_{N\to\infty} |\tau_N-t_N|=0
    \end{align}
    almost surely.
\end{proposition}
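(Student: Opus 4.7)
The strategy is to exploit the a.s.\ convergence $W(t) := e^{-\lambda t}Z(t) \to Y$ together with the crucial fact that, on $\Omega_\infty$, $Y > 0$ almost surely. The latter follows from the distributional description in \eqref{eq:Ydistribution}: we have $P(Y = 0) = p = P(\Omega_\infty^c)$, and $\{Y = 0\} \supseteq \Omega_\infty^c$ since extinction forces $Y = 0$, so the two events agree up to a null set. Consequently, on $\Omega_\infty$ the approximate hitting time $t_N = (1/\lambda)\log(N/Y)$ is finite and tends to $\infty$ as $N \to \infty$.

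I then work on the almost-sure event $\{W(t) \to Y\} \cap \{Y > 0\} \subseteq \Omega_\infty$. Fix $\delta > 0$ and choose $\epsilon = \epsilon(\omega,\delta) \in (0, Y)$ small enough that
\[
(Y - \epsilon) e^{\lambda \delta} > Y \quad \text{and} \quad (Y + \epsilon) e^{-\lambda \delta} < Y;
\]
such an $\epsilon$ exists because $\delta > 0$ makes both inequalities strict while $Y > 0$. By the a.s.\ convergence there is a random $T$ with $|W(t) - Y| < \epsilon$ for all $t \geq T$. For $N$ large enough that $t_N - \delta > T$, the identity $Y e^{\lambda t_N} = N$ yields
\[
Z(t_N - \delta) < (Y + \epsilon) e^{\lambda(t_N - \delta)} = \frac{(Y+\epsilon) e^{-\lambda \delta}}{Y}\, N < N,
\]
\[
Z(t_N + \delta) > (Y - \epsilon) e^{\lambda(t_N + \delta)} = \frac{(Y-\epsilon) e^{\lambda \delta}}{Y}\, N > N.
\]
Since $\tau_N$ is the first time $Z$ reaches $N$, these two inequalities force $t_N - \delta < \tau_N \leq t_N + \delta$, and hence $|\tau_N - t_N| \leq \delta$ for all sufficiently large $N$.

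To finish, I take a sequence $\delta_k \downarrow 0$, intersect the countably many full-measure events on which the above holds for each $\delta_k$, and conclude that $|\tau_N - t_N| \to 0$ almost surely on $\Omega_\infty$. I do not anticipate any real obstacle here: the only subtle point is that $\epsilon$ and $T$ depend on the realization of $Y$ and of the martingale convergence, but this is handled pointwise on a single full-measure event. The essential ingredient is simply that $Y > 0$ on $\Omega_\infty$, which is exactly why we condition on this event in the statement.
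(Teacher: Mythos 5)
Your overall strategy is exactly the one the paper uses: work pointwise on the full-measure event $\{e^{-\lambda t}Z(t)\to Y\}\cap\{Y>0\}\subseteq\Omega_\infty$, sandwich $Z$ between $(Y\pm\epsilon)e^{\lambda t}$ for $t\geq T$, and deduce $t_N-\delta<\tau_N\leq t_N+\delta$ for large $N$. The upper bound $\tau_N\leq t_N+\delta$ is correctly obtained from $Z(t_N+\delta)>N$. However, there is a genuine gap in the lower bound. You conclude $\tau_N>t_N-\delta$ from the single pointwise inequality $Z(t_N-\delta)<N$, but $\tau_N$ is the \emph{first} time $Z$ reaches $N$ and $Z$ is not monotone (individuals die), so $Z$ could in principle have exceeded $N$ at some earlier time and dropped back down; a value below $N$ at the one instant $t_N-\delta$ does not rule this out. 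To justify $\tau_N>t_N-\delta$ you need $Z(s)<N$ for \emph{all} $s\leq t_N-\delta$. On $[T,\,t_N-\delta]$ this does follow from your own uniform control, since $Z(s)<(Y+\epsilon)e^{\lambda s}\leq(Y+\epsilon)e^{\lambda(t_N-\delta)}<N$ for every such $s$, but you never say this, and the interval $[0,T)$ is not addressed at all.

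The fix is exactly what the paper does: since $T=T(\omega)$ is finite and $Z$ has finitely many jumps on $[0,T]$ almost surely, $\sup_{s\leq T}Z(s)<\infty$, so one may additionally require $N$ to be large enough that $Z(s)<N$ for all $s<T$; combined with the uniform bound on $[T,t_N-\delta]$ this gives $Z(s)<n$ for all $s\leq t_n-\delta$ and hence $\tau_n\geq t_n-\delta$. With that repair (and your concluding intersection over a sequence $\delta_k\downarrow 0$, which is fine), the argument is complete and coincides with the paper's proof. Everything else in your write-up, including the observation that $Y>0$ a.s.\ on $\Omega_\infty$ via \eqref{eq:Ydistribution}, is correct.
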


\begin{proof}
    Section \ref{app:approxtime}.
\end{proof}

The proof of the fixed-size result \eqref{eq:fixedsizemainresult} combines the fixed-time result \eqref{eq:fixedtimemainresult} with Proposition \ref{thm:passageApprox} as discussed in Section \ref{sec:fixedsizeproof}.
Finally, a simpler version of the argument used to prove Theorem \ref{thm:mainresult} can be used to prove analogous limit theorems for the total number of mutations at time $t$, $M(t)$.

\begin{proposition}\label{thm:totalnummutresult}
\begin{enumerate}[(1)]
    \item Conditional on $\Omega_\infty$, 
  \begin{align} 
    \label{eq:totalmutfixedtime}
        \lim_{t \to \infty} e^{-\lambda t} M(t) = \nu Y \int_0^\infty e^{-\lambda s} (1-p_0(s)) ds 
    \end{align}
almost surely.
    \item 
    Conditional on $\Omega_\infty$,
\begin{align} \label{eq:totalmutfixedsize}
        & \lim_{N \to \infty} N^{-1} M(\tau_N) = \nu \int_0^\infty e^{-\lambda s} (1-p_0(s)) ds
    \end{align}
    almost surely.
\end{enumerate}
\end{proposition}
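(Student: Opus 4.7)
The plan is to mimic the strategy used for Theorem \ref{thm:mainresult} in a simplified form, exploiting the fact that $M(t)$ splits naturally into two monotone processes. I would write $M(t) = M_+(t) - M_-(t)$, where $M_+(t) := N_t$ is the total number of mutations ever generated by time $t$ and $M_-(t) := M_+(t) - M(t)$ counts mutations whose carrier lineage has gone completely extinct by time $t$. Both are non-decreasing in $t$, which is the structural input required for the Harris-type sandwich argument (Theorem 21.1 of \cite{harris1964theory}) invoked in the main theorem.

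The process $M_+(t)$ is handled quickly. Conditional on $(Z(s))_{s \leq t}$, it is a Cox process with cumulative intensity $\nu\int_0^t Z(s)\,ds$. Since $e^{-\lambda s} Z(s) \to Y$ almost surely, dominated convergence (after the change of variables $u = t-s$) gives $e^{-\lambda t}\int_0^t Z(s)\,ds \to Y/\lambda$ a.s., and on $\Omega_\infty$ the strong law of large numbers for Poisson processes then yields $e^{-\lambda t} M_+(t) \to \nu Y/\lambda$ almost surely.

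The main work is on $M_-(t)$. By memorylessness of the exponential lifetimes, each mutation occurring at time $T_i$ initiates an independent continuous-time Galton--Watson subtree $(\tilde Z^i(s))_{s\geq 0}$ with $\tilde Z^i(0)=1$, and the mutation is absent at time $t$ iff $\tilde Z^i(t-T_i)=0$. The conditional mean given the backbone tree is $\nu\int_0^t Z(s)\, p_0(t-s)\,ds$, which, after replacing $Z(s)$ by $Y e^{\lambda s}$, suggests the pathwise approximation
\[
M_-(t) \;\approx\; \nu\, Y\, e^{\lambda t}\int_0^\infty e^{-\lambda u}p_0(u)\,du.
\]
Following the template of Theorem \ref{thm:mainresult}, the goal is an exponential $L^1$ bound of the form
\[
\E\!\left[\left| e^{-\lambda t} M_-(t) - \nu Y \int_0^\infty e^{-\lambda u} p_0(u)\,du \right|\right] \;\leq\; C e^{-\gamma t}
\]
for some $\gamma>0$, obtained by combining (i) concentration of $M_-(t)$ around its conditional mean via conditional independence of the subtree extinction indicators together with the moment bound $\E[Z(s)^2] = O(e^{2\lambda s})$; (ii) $L^2$ convergence of the martingale $e^{-\lambda s}Z(s)\to Y$, used to substitute $Ye^{\lambda s}$ for $Z(s)$ in the integrand; and (iii) the exponential tail estimate $\int_t^\infty e^{-\lambda u}p_0(u)\,du = O(e^{-\lambda t})$. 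Summing the bound along a geometric lattice $t_n = n\Delta$ and applying Borel--Cantelli yields a.s. convergence along the lattice, and the monotonicity of $M_-$ sandwiches the continuous-time process between consecutive lattice values to upgrade the convergence to $t\to\infty$. Subtracting from the $M_+$ limit proves \eqref{eq:totalmutfixedtime}.

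Part (2) follows from part (1) and Proposition \ref{thm:passageApprox}. Since $\tau_N\to\infty$ a.s. on $\Omega_\infty$, applying \eqref{eq:totalmutfixedtime} at the stochastic time $\tau_N$ gives $e^{-\lambda\tau_N}M(\tau_N)\to \nu Y\int_0^\infty e^{-\lambda s}(1-p_0(s))\,ds$ a.s. Because $e^{\lambda t_N}/N = 1/Y$ by the definition \eqref{eq:tNdef} of $t_N$ and $\tau_N-t_N\to 0$ a.s. by Proposition \ref{thm:passageApprox}, the prefactor $e^{\lambda\tau_N}/N = Y^{-1} e^{\lambda(\tau_N-t_N)}$ tends to $1/Y$, so writing $N^{-1}M(\tau_N) = (e^{\lambda\tau_N}/N)\cdot e^{-\lambda\tau_N}M(\tau_N)$ yields \eqref{eq:totalmutfixedsize}, with the $Y$ factors cancelling. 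The principal obstacle is step (i) in the $L^1$ analysis of $M_-(t)$, where one must carefully set up a filtration jointly adapted to the backbone tree and the mutation epochs so that the subtree extinction indicators become conditionally independent; this is, however, strictly simpler than the corresponding step in Theorem \ref{thm:mainresult}, since we need only whether a subtree is extinct rather than whether it carries exactly $j$ individuals, so no delicate bookkeeping of entries into and exits from a specific frequency level is required.
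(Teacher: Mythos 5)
Your proposal is correct and follows essentially the same route as the paper: the identical decomposition $M(t)=M_+(t)-M_-(t)$ into the same two increasing processes, the same approximation of each by $\nu Y e^{\lambda t}\int_0^{t} e^{-\lambda u}p_0(u)\,du$ (respectively $\nu Y(e^{\lambda t}-1)/\lambda$) with an exponential $L^1$ error bound obtained by specializing the Lemma~\ref{lemma:approx1}--\ref{lemma:approx2} machinery, followed by a monotonicity argument to upgrade to almost sure convergence, and the same combination with Proposition~\ref{thm:passageApprox} for part (2). The only cosmetic differences are that you dispatch $M_+$ via the conditional Cox-process strong law rather than rerunning the discretization, and you upgrade via Borel--Cantelli along a lattice plus sandwiching instead of the Harris-type argument of Proposition~\ref{prop:Sj+(t)convergence}, which uses the integrated $L^1$ condition \eqref{eq:finiteintegral} directly; both rest on the same two ingredients (integrable $L^1$ error and monotonicity) and are interchangeable here.
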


\begin{proof}
    Section \ref{app:totalmutgeneral}.
\end{proof}

By combining the results of Theorem \ref{thm:mainresult} and Proposition \ref{thm:totalnummutresult}, we obtain the following limits for the proportion of mutations found in $j \geq 1$ 
individuals:
    \begin{align} \label{eq:proportionofmutfixedtime} 
        & \lim_{t \to \infty} \frac{S_j(t)}{M(t)} = \lim_{N \to \infty} \frac{S_j(\tau_N)}{M(\tau_N)}= \frac{\int_0^\infty e^{-\lambda s} p_j(s) ds}{\int_0^\infty e^{-\lambda s} (1-p_0(s)) ds}, \quad j \geq 1.
    \end{align}
In the application Section \ref{sec:application}, we will also be interested in the proportion of mutations found in $j \geq 1$ individuals out of all mutations found in $\geq j$ individuals.
If we define 
\[
M_j(t) := \sum_{k \geq j} S_j(t), \quad j \geq 1, t \geq 0,
\]
as the total number of mutations found in $ \geq j$ individuals,
this proportion is given by
\begin{align} \label{eq:proportionofmutfixedtimemorethanjind}
    \lim_{t \to \infty} \frac{S_j(t)}{M_j(t)} = \lim_{N \to \infty} \frac{S_j(\tau_N)}{M_j(\tau_N)}= \frac{\int_0^\infty e^{-\lambda s} p_j(s) ds}{\int_0^\infty e^{-\lambda s} \big(\sum_{k=j}^\infty p_k(s)\big) ds}, \quad j \geq 1,
\end{align}
since limit theorems for $M_j(t)$ follow from Theorem \ref{thm:mainresult} and Proposition \ref{thm:totalnummutresult} by writing $M_j(t) = M(t) - \sum_{k=1}^{j-1} S_k(t)$.
Note that for both proportions, the fixed-time and fixed-size limits are the same, as the variability in population size at a fixed time has been removed.
Also note that both proportions are independent of the mutation rate $\nu$.
In Section \ref{sec:application}, we show that for the birth-death process, these properties enable us to define a consistent estimator for the extinction probability $p$ which applies both to the fixed-time and fixed-size SFS.

\subsection{Special case: Birth-death process} \label{sec:mainresultbd}

For the special case of the birth-death process, 
we are able to derive explicit expressions for the limits in Theorem \ref{thm:mainresult} and Proposition \ref{thm:totalnummutresult},
as we demonstrate in the following corollary.

\begin{corollary} \label{corollary}
For the birth-death process, conditional on $\Omega_\infty$,
\begin{enumerate}[(1)]
    \item 
    the random variable $Y$ in Theorem \ref{thm:mainresult} 
    has the exponential distribution with mean $1/q$,
    and the fixed-time result \eqref{eq:fixedtimemainresult} can  be written explicitly as
    \begin{align} \label{eq:fixedtimemainresultbd}
    \begin{split}
                \lim_{t \to \infty} e^{-\lambda t} S_j(t) &= \frac{\nu q Y}{\lambda} \int_0^1 (1-py)^{-1} (1-y) y^{j-1} dy \\
        &= \frac{\nu q Y}{\lambda} \sum_{k=0}^\infty \frac{p^k}{(j+k)(j+k+1)}, \quad j\geq 1.
    \end{split}
    \end{align}
    For the special case $p=0$ of a pure-birth or Yule process,
    \begin{align*}
        \lim_{t \to \infty} e^{-\lambda t} S_j(t) 
        = \frac{\nu Y}{\lambda} \frac1{j(j+1)}.
    \end{align*}
    \item 
    the fixed-size result \eqref{eq:fixedsizemainresult} can be written explicitly as
    \begin{align} \label{eq:fixedsizemainresultbd}
    \begin{split}
                \lim_{N \to \infty} N^{-1} S_j(\tau_N) &= \frac{\nu q}{\lambda} \int_0^1 (1-py)^{-1} (1-y) y^{j-1} dy \\
        &= \frac{\nu q}{\lambda} \sum_{k=0}^\infty \frac{p^k}{(j+k)(j+k+1)}, \quad j\geq 1.
    \end{split}
    \end{align} 
    For the pure-birth or Yule process,
     \begin{align} \label{eq:fixedsizemainresultbdYule}
        \lim_{N \to \infty} N^{-1} S_j(\tau_N) = \frac{\nu}{\lambda} \frac1{j(j+1)}.
    \end{align}
    \item the fixed-time result \eqref{eq:totalmutfixedtime} can be written explicitly as
        \begin{align} 
    \label{eq:totalmutfixedtimebd}
        \lim_{t \to \infty} e^{-\lambda t} M(t) = \begin{cases} \dfrac{\nu Y}{\lambda}, & p=0, \\ - \dfrac{\nu q \log(q) Y}{\lambda p}, &0<p<1. \end{cases} 
    \end{align}
    \item the fixed-size result \eqref{eq:totalmutfixedsize} can be written explicitly as
        \begin{align} \label{eq:totalmutfixedsizebd}
        & \lim_{N \to \infty} N^{-1} M(\tau_N) = \begin{cases} \dfrac{\nu}{\lambda}, & p=0, \\ - \dfrac{\nu q \log(q)}{\lambda p}, & 0<p<1. \end{cases}
    \end{align}
\end{enumerate}
\end{corollary}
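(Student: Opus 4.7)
The proof is entirely a matter of evaluation given Theorem \ref{thm:mainresult} and Proposition \ref{thm:totalnummutresult}. The plan is to substitute the explicit birth-death transition probabilities from \eqref{eq:sizedisgeneral} into the two integrals $\int_0^\infty e^{-\lambda s} p_j(s)\, ds$ and $\int_0^\infty e^{-\lambda s}(1-p_0(s))\, ds$ and reduce each to closed form. Writing $\alpha(s) := p(e^{\lambda s}-1)/(e^{\lambda s}-p)$ and $\beta(s) := (e^{\lambda s}-1)/(e^{\lambda s}-p)$, one has $p_0(s) = \alpha(s)$, $p_j(s) = (1-\alpha(s))(1-\beta(s))\beta(s)^{j-1}$ for $j \geq 1$, and $1-p_0(s) = q e^{\lambda s}/(e^{\lambda s}-p)$.

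For parts (1) and (2), the key step is the substitution $y = \beta(s)$, a smooth bijection from $[0,\infty)$ onto $[0,1)$. Solving for $e^{\lambda s}$ produces the identities $e^{\lambda s} = (1-py)/(1-y)$ and $e^{\lambda s} - p = q/(1-y)$, from which $1-\alpha(s) = 1-py$ and, after a short simplification, $e^{-\lambda s} p_j(s) = (1-y)^2 y^{j-1}$. Direct differentiation yields $ds = q/[\lambda(1-py)(1-y)]\, dy$, and combining these gives
\begin{equation*}
e^{-\lambda s} p_j(s)\, ds \;=\; \frac{q\,(1-y)\, y^{j-1}}{\lambda(1-py)}\, dy.
\end{equation*}
Integrating over $y \in [0,1)$ produces the integral form on the first line of \eqref{eq:fixedtimemainresultbd} and \eqref{eq:fixedsizemainresultbd}. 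Expanding $(1-py)^{-1} = \sum_{k \geq 0} p^k y^k$ (valid since $p \in [0,1)$) and interchanging sum and integral by Tonelli, followed by the elementary identity $\int_0^1 (1-y) y^{j+k-1}\, dy = 1/[(j+k)(j+k+1)]$, yields the stated series. The Yule specializations $p=0$ are the surviving $k=0$ terms. For parts (3) and (4), $e^{-\lambda s}(1-p_0(s)) = q/(e^{\lambda s}-p)$; the case $p=0$ integrates directly to $1/\lambda$, while for $0 < p < 1$ the substitution $u = e^{\lambda s}$ and the partial-fraction identity $[u(u-p)]^{-1} = p^{-1}[(u-p)^{-1} - u^{-1}]$ produce $-\log(q)/(\lambda p)$ after telescoping.

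To identify the conditional law of $Y$, I plan to use the birth-death probability generating function $E[z^{Z(t)}] = \alpha(t) + (1-\alpha(t))(1-\beta(t))\,z/(1-z\beta(t))$, set $z = \exp(-\theta e^{-\lambda t})$, and expand as $t \to \infty$ using $1-\beta(t) \sim q e^{-\lambda t}$ and $1 - z\beta(t) \sim (q+\theta) e^{-\lambda t}$ to obtain $E[e^{-\theta Y}] = p + q^2/(q+\theta)$. This is the Laplace transform of the mixture $p\,\delta_0 + q \cdot \mathrm{Exp}(q)$; matching with \eqref{eq:Ydistribution} identifies $\xi \sim \mathrm{Exp}(q)$, with mean $1/q$. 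No substantive obstacle arises: the corollary is a bookkeeping exercise once the substitution $y=\beta(s)$ is in hand, and the only minor points—verifying the change-of-variables identities and justifying the term-by-term integration—are routine given the nonnegativity of the integrands involved.
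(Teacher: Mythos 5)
Your proposal is correct and follows essentially the same route as the paper: substituting the explicit birth--death law \eqref{eq:sizedisgeneral} into the integrals and changing variables to $y=(e^{\lambda s}-1)/(e^{\lambda s}-p)$ (the paper does this in two steps, $x=e^{-\lambda s}$ followed by $y=(1-x)/(1-px)$, but the composition is exactly your single substitution), then expanding $(1-py)^{-1}$ as a geometric series, with the $1-p_0$ integral handled by the same elementary substitution and partial fractions. The only difference is that you additionally supply a generating-function argument identifying $\xi\sim\mathrm{Exp}(q)$, a fact the paper asserts without proof in its Corollary; your sketch of that step is sound provided you justify passing to the limit in the Laplace transform via the almost sure convergence $e^{-\lambda t}Z(t)\to Y$ and bounded convergence.
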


\begin{proof}
    Section \ref{app:bdsimplification}.
\end{proof}

Similarly, the proportion of mutations found in $j \geq 1$ individuals,
appearing in expression \eqref{eq:proportionofmutfixedtime},
can be written explicitly as 
\begin{align} \label{eq:proportionofmutbd}
     \frac{\int_0^\infty e^{-\lambda s} p_j(s) ds}{\int_0^\infty e^{-\lambda s} (1-p_0(s)) ds}
             &= \begin{cases} \dfrac1{j(j+1)}, & p=0, \\ \displaystyle -\frac{p}{\log(q)} \int_0^{1} (1-p y)^{-1} (1-y) y^{j-1}  dy, & 0<p<1, \end{cases} 
\end{align}
and the proportion of mutations in $j$ individuals out of all mutations in $\geq j$ individuals,
appearing in expression \eqref{eq:proportionofmutfixedtimemorethanjind},
can be written as
\begin{align} \label{eq:proportionofmutbdmorethanj}
    \varphi_j(p) := \frac{\int_0^\infty e^{-\lambda s} p_j(s) ds}{\int_0^\infty e^{-\lambda s} \big(\sum_{k=j}^\infty p_k(s)\big) ds} = \begin{cases} \dfrac{1}{j+1}, & p=0, \\ 1-\dfrac{ \int_0^{1} (1-p y)^{-1} y^{j}  dy}{\int_0^1 (1-py)^{-1} y^{j-1} dy}, & 0<p<1,
    \end{cases}
\end{align}
see Section \ref{app:propfoundinmroethanjcells}.
Note that expressions \eqref{eq:proportionofmutbd} and \eqref{eq:proportionofmutbdmorethanj} give the same proportion for $j=1$.
It can be shown that for any $j \geq 1$, 
$\varphi_j(p)$ is strictly decreasing in $p$ (Section \ref{app:varphidecreasing}).
In Section \ref{sec:application}, we use this fact to 
develop an estimator for the extinction probability $p$.

We showed
in expression (C.1) of \cite{gunnarsson2021exact} that
for $p=0$,
\begin{align*}
    E[S_j(\tau_N)] = \frac{\nu N}{\lambda} \cdot \frac1{j(j+1)}, \quad j=2,\ldots,N-1.
\end{align*}
In other words, 
the fixed-size result  \eqref{eq:fixedsizemainresultbdYule} holds in the mean
even for finite values of $N$, excluding boundary effects at $j=1$ and $j=N$.

\subsection{Connection to previous results}

A nonasymptotic version of 
expression \eqref{eq:fixedtimemainresultbd} (which implies the asymptotic version) has previously been established for the expected value of the fixed-time SFS, first for a semideterministic model in \cite{ohtsuki2017forward} and then for a fully stochastic model in \cite{gunnarsson2021exact}.
We also argued heuristically in \cite{gunnarsson2021exact} that expression \eqref{eq:fixedsizemainresultbd} holds for the fixed-size spectrum in expectation. 
Similar results for the expected value of the SFS have since appeared for example in \cite{morison2023single} and \cite{bonnet2023site}.
The most similar result to Corollary \ref{corollary} in the literature is given by Theorem 2.3 in Lambert \cite{lambert2009allelic}.
In this work, a ranked sample is drawn from a coalescent point process (CPP),
and an almost sure law of large numbers is established as the sample size is sent to infinity.
When Lambert's result is specialized to the case of exponentially distributed lifetimes,
it has the same form as expression \eqref{eq:fixedsizemainresultbd}.
We note that our result \eqref{eq:fixedsizemainresultbd} deals with the SFS of a population which evolves according to a branching process and is stopped at the first time it reaches size $N$.
In addition, we give the fixed-time result \eqref{eq:fixedtimemainresultbd} for a continuous-time birth-death process, with a random limit.
Finally, we establish both fixed-time and fixed-size results for a general offspring distribution in our main result (Theorem \ref{thm:mainresult}),
a case not addressed by Lambert's CPP analysis.

\section{Proof of Theorem \ref{thm:mainresult}} \label{sec:proofofmainresult}

In this section, we sketch the proof of the main result, Theorem \ref{thm:mainresult}.
Proving the fixed-time result \eqref{eq:fixedtimemainresult} represents most of the work,
which is discussed in Sections \ref{sec:firstdecomp} to \ref{sec:finalstepsfixedtime}.
The main idea is to write the site-frequency spectrum process $(S_j(t))_{t \geq 0}$
as the difference of two increasing processes in time,
and to prove limit theorems for the increasing processes.
The fixed-size result \eqref{eq:fixedsizemainresult} follows easily from fixed-time result \eqref{eq:fixedtimemainresult} and 
Proposition \ref{thm:passageApprox}
via the continuous mapping theorem, as is discussed in Section \ref{sec:fixedsizeproof}.

\subsection{Decomposition into increasing processes $S_{j,+}(t)$ and $S_{j,-}(t)$} \label{sec:firstdecomp}

Fix $j \geq 1$. 
The key idea of the proof 
of the fixed-time result \eqref{eq:fixedtimemainresult}
is to decompose the process $(S_j(t))_{t \geq 0}$ into the difference of two increasing processes $(S_{j,+}(t))_{t \geq 0}$ and $(S_{j,-}(t))_{t \geq 0}$.
To describe these processes, 
we first need to establish some notation.

Recall that 
for mutation $i \in {\cal M}_t$ and $s \leq t$,
$C^i(s)$ is the size of the clone containing mutation $i$ at time $s$,
meaning the number of individuals carrying mutation $i$ at time $s$.
Set
$\tau_{j,-}^i(0) := 0$ and define recursively
for $k \geq 1$,
\begin{align*}
    & \tau_{j,+}^i(k) := \inf\{s>\tau_{j,-}^i(k-1) : C^i(s) = j\}, \\
    & \tau_{j,-}^i(k) := \inf\{s>\tau_{j,+}^i(k) : C^i(s) \neq j\}.
\end{align*}
Note that $\tau_{j,+}^i(k)$ is the $k$-th time at which the clone containing mutation $i$ reaches or \inquotes{enters} size $j$, and $\tau_{j,-}^i(k)$ is the $k$-th time at which it leaves or \inquotes{exits} size $j$.
Next, define
\begin{align} \label{eq:It+-ijdef}
I_{j,+}^i(t) := \sum_{\ell=1}^\infty 1_{\{\tau_{j,+}^i(\ell) \leq t\}}, \quad I_{j,-}^i(t) := \sum_{\ell=1}^\infty 1_{\{\tau_{j,-}^i(\ell) \leq t\}},    
\end{align}
as the number of times the clone containing mutation $i$ 
enters and exits size $j$, respectively, up until time $t$.
Then, for each $k \geq 1$, define the increasing processes $(S_{j,+}^k(t))_{t \geq 0}$ and $(S_{j,-}^k(t))_{t \geq 0}$ by
\begin{align} \label{eq:Sj+-kdef}
    & S_{j,+}^k(t) := \sum_{i \in {\cal M}_t} 1_{\{I_{j,+}^i(t)\geq k\}}, \quad S_{j,-}^k(t) := \sum_{i \in {\cal M}_t} 1_{\{I_{j,-}^i(t) \geq k\}}.
\end{align}
These processes keep track of the number of mutations in ${\cal M}_t$ whose clones enter and exit size $j$, respectively, at least $k$ times up until time $t$.
We can now finally define the increasing processes $(S_{j,+}(t))_{t \geq 0}$ and $(S_{j,-}(t))_{t \geq 0}$ as
\begin{align*}
    S_{j,+}(t) := \sum_{k=1}^\infty S_{j,+}^k(t), \quad S_{j,-}(t) := \sum_{k=1}^\infty S_{j,-}^k(t).
\end{align*}
A key observation is that these processes count the total number of instances that a mutation enters and exits size $j$, respectively, up until time $t$.
To see why, note that
\begin{align*}
    \sum_{k=1}^\infty S_{j,+}^k(t) &= \sum_{i \in {\cal M}_t} \sum_{k=1}^\infty   1_{\{I_{j,+}^i(t)\geq k\}} = \sum_{i \in {\cal M}_t} \sum_{k=1}^\infty \sum_{\ell=k}^\infty 1_{\{I_{j,+}^i(t)=\ell\}} \\
    &= \sum_{i \in {\cal M}_t} \sum_{\ell=1}^\infty \sum_{k=1}^\ell 1_{\{I_{j,+}^i(t)=\ell\}} = \sum_{i \in {\cal M}_t} \sum_{\ell=1}^\infty \ell 1_{\{I_{j,+}^i(t)=\ell\}} \\
    &= \sum_{i \in {\cal M}_t} I_{j,+}^i(t).
\end{align*}
Similar calculations hold for
$\sum_{k=1}^\infty S_{j,-}^k(t)$.
Note that $I_{j,+}^i(t) - I_{j,-}^i(t) = 1$ if and only if $C^i(t)=j$, and $I_{j,+}^i(t) - I_{j,-}^i(t)=0$ otherwise.
It follows that
\begin{align} \label{eq:Sjtdecomp}
    S_j(t) = S_{j,+}(t) - S_{j,-}(t).
\end{align} 
The fixed-time result \eqref{eq:fixedtimemainresult} will follow from limit theorems for $S_{j,+}(t)$ and $S_{j,-}(t)$, which in turn follow from approximation results for the subprocesses $S_{j,+}^k(t)$ and $S_{j,-}^k(t)$ for $k \geq 1$.

\subsection{Approximation results for $S_{j,+}^k(t)$ and $S_{j,-}^k(t)$} \label{sec:Sj+klimits}

We begin by establishing approximation results for $S_{j,+}^k(t)$ and $S_{j,-}^k(t)$ for each $k \geq 1$.
First, for the branching process $(Z(t))_{t \geq 0}$ with $Z(0)=1$, 
set $\tau_{j}^-(0) := 0$ and define recursively 
\begin{align} \label{eq:taujbpdef}
\begin{split}
        & \tau_{j}^+(k) := \inf\{s>\tau_{j}^-(k-1): Z(s) = j\}, \\
    & \tau_{j}^-(k) := \inf\{s>\tau_{j}^+(k): Z(s) \neq j\}, \quad k \geq 1.
\end{split}
\end{align}
Set 
\begin{align} \label{eq:pjkplusminusdef}
\begin{split}
       & p_{j,+}^k(t) := P(\tau_j^+(k) \leq t), \quad p_{j,-}^k(t) := P(\tau_j^-(k) \leq t),
\end{split}
\end{align}
which are the probabilities that the branching process enters and exits size $j$, respectively, at least $k$ times up until time $t$.
A key observation is that
\begin{align} \label{eq:p_jdecomp}
    p_j(t)=P(Z(t)=j)=\sum_{k=1}^\infty\big(p_{j,+}^k(t)-p_{j,-}^k(t)\big),
\end{align}
which follows from the fact that
\begin{align*}
    \{Z(t)=j\} &= \bigcup_{k\geq 1}\{\tau_j^+(k)\leq t, \tau_j^-(k)>t\} \\
    &=\bigcup_{k\geq 1}\{\tau_j^+(k)\leq t\}\backslash\{\tau_j^-(k)\leq t\}.
\end{align*}
In addition, since almost surely, $Z(t) \to 0$ or $Z(t) \to \infty$ as $t \to \infty$, there exist $C>0$ and $0<\theta<1$ so that for each $t \geq 0$,
\begin{align} \label{eq:pkbound}
    p_{j,-}^k(t) \leq p_{j,+}^k(t) \leq P(\tau_j^+(k) < \infty) \leq C \theta^k.
\end{align}
Since the discrete-time process embedded in $(Z(t))_{t \geq 0}$ is a random walk, where the transition $j \mapsto j+k-1$ for $j \geq 1$ and $k \geq 0$ is made with probability $u_k$, $\theta$ can be selected independently of $j$.
Indeed, when the population leaves size $j$, it increases in size with probability $\rho := (1-u_0-u_1)/(1-u_1) >0$ since $u_0+u_1<1$ by supercriticality. 
If the population increases in size, the probability it returns to size $j$ is some number $\gamma<1$, since $Z(t) \to \infty$ as $t \to \infty$ with positive probability. 
Both $\rho$ and $\gamma$ are independent of $j$.
Therefore, when the population leaves size $j$, the probability it returns to size $j$ is upper bounded by $\rho\gamma+(1-\rho) = 1 - (1-\gamma)\rho<1$,
a number independent of $j$.
For $u_0>0$ and $j > 1$, we can take $\theta$ as the larger of this value and the probability that the population ever reaches size $j$ starting from one individual, which is upper bounded by $1-u_0<1$.
The constant $C>0$ takes care of the cases $u_0=0$ and $j=1$.

The approximation results for $S_{j,+}^k(t)$ and $S_{j,-}^k(t)$ can be established using almost identical arguments, 
so
it suffices to analyze $S_{j,+}^k(t)$.
Recall that $S_{j,+}^k(t)$ is the number of mutations whose clones enter size $j$ at least $k$ times up until time $t$.
At any time $s \leq t$,
a mutation occurs at rate $\nu Z(s)$,
and with probability $p_{j,+}^k(t-s)$,
its clone enters size $j$ at least $k$ times up until time $t$.
This suggests the approximation
\begin{align} \label{eq:approx1}
    S_{j,+}^k(t) \approx  \nu \int_0^t Z(s) p_{j,+}^k(t-s)ds =: \bar{S}_{j,+}^k(t).
\end{align}
Since $e^{-\lambda t}Z(t) \to Y$ as $t \to \infty$, 
we can further approximate for large $t$,
\begin{align} \label{eq:approx2}
    \bar{S}_{j,+}^k(t) \approx \nu \int_0^t Y e^{\lambda s} p_{j,+}^k(t-s)ds =: \hat{S}_{j,+}^k(t).
\end{align}
For the remainder of the section,
our goal is to 
establish bounds on the $L^1$-error associated with the approximations $S_{j,+}^k(t) \approx  \bar{S}_{j,+}^k(t) \approx \hat{S}_{j,+}^k(t)$.

We first consider the approximation \eqref{eq:approx1}.
For $\Delta>0$, define the Riemann sum
\begin{align} \label{eq:approx3}
    \bar{S}_{j,+,\Delta}^k(t) := \nu \Delta \sum_{\ell=0}^{\lfloor t/\Delta\rfloor} Z(\ell\Delta) p_{j,+}^k(t-\ell\Delta).
\end{align}
Clearly, $\lim_{\Delta\to 0}\bar{S}^k_{j,+,\Delta}(t)=\bar{S}^k_{j,+}(t)$ almost surely. 
In addition, for some $C>0$,
$$
\bar{S}^k_{j,+,\Delta}(t)\leq C t \max_{s \leq t} Z(s).
$$
Since $(Z(s))_{s \geq 0}$ is a nonnegative submartingale, 
we can use Doob's inequality to show that $C t E\big[\max_{s \leq t} Z(s)\big]<\infty$ for each $t \geq 0$.
Therefore, by dominated convergence,
\begin{align*}
    \lim_{\Delta\to 0}E\big|\bar{S}^k_{j,+,\Delta}(t)-\bar{S}^k_{j,+}(t)\big|=0, \quad t \geq 0.
\end{align*}
It then follows from the triangle inequality that
\begin{align} \label{eq:L1approx3}
   E\big|S_{j,+}^k(t)-\bar{S}^k_{j,+}(t)\big| \leq 
    \lim_{\Delta\to 0}E\big|S_{j,+}^k(t)-\bar{S}^k_{j,+,\Delta}(t)\big|, \quad t \geq 0.
\end{align}
To bound the $L^1$-error of the approximation \eqref{eq:approx1}, it suffices to bound the right-hand side of \eqref{eq:L1approx3}.
We accomplish this in the following lemma.

\begin{lemma}
\label{lemma:approx1}
Let $t>0$ and $\Delta>0$. There exist constants $C_1>0$ and $C_2>0$ independent of $t$, $\Delta$ and $k$ such that
\begin{align} \label{eq:lemmaapprox1}
    E\left[\left(S^k_{j,+}(t)-\bar{S}^k_{j,+,\Delta}(t)\right)^2\right]\leq C_1\theta^kte^{\lambda t}+C_2\Delta e^{3\lambda t}.
\end{align}
\end{lemma}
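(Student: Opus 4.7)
The idea is to partition $[0,t]$ into subintervals of length $\Delta$, group mutations by the subinterval of occurrence, and decompose the error into a stochastic fluctuation piece (from the randomness in whether each clone enters size $j$ at least $k$ times) and a deterministic Riemann-sum piece. Concretely, for $\ell = 0, 1, \ldots, \lfloor t/\Delta\rfloor$, let $I_\ell := [\ell\Delta, (\ell+1)\Delta) \cap [0,t]$ and, writing $T_i$ for the arrival time of mutation $i$, set
\[
Y_\ell := \sum_{i \in \mathcal{M}_t,\, T_i \in I_\ell} 1_{\{I^i_{j,+}(t) \geq k\}}, \qquad \hat{Y}_\ell := \nu \Delta Z(\ell\Delta)\, p^k_{j,+}(t - \ell\Delta),
\]
so that $S^k_{j,+}(t) = \sum_\ell Y_\ell$ and $\bar{S}^k_{j,+,\Delta}(t) = \sum_\ell \hat{Y}_\ell$. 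Let $\mathcal{G}$ denote the full genealogical filtration (recording the tree of descent of every individual, without mutation marks) and decompose
\[
Y_\ell - \hat{Y}_\ell = \bigl(Y_\ell - E[Y_\ell \mid \mathcal{G}]\bigr) + \bigl(E[Y_\ell \mid \mathcal{G}] - \hat{Y}_\ell\bigr).
\]
After summing in $\ell$, the inequality $(a+b)^2 \leq 2a^2 + 2b^2$ reduces the task to bounding the two pieces separately.

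For the fluctuation piece, the branching property gives that, conditional on its arrival time $T_i$, the clone initiated by mutation $i$ is an independent fresh copy of the Galton-Watson process, and hence the indicator $1_{\{I^i_{j,+}(t) \geq k\}}$ is Bernoulli with parameter $p^k_{j,+}(t - T_i) \leq C\theta^k$ by \eqref{eq:pkbound}. Applying Campbell's formula to the (conditionally) Poisson mutation process on the genealogy, I obtain
\[
E[Y_\ell \mid \mathcal{G}] = \int_{I_\ell} \nu Z(s) p^k_{j,+}(t-s)\, ds, \qquad \Var(Y_\ell \mid \mathcal{G}) \leq \int_{I_\ell} \nu Z(s) p^k_{j,+}(t-s)\, ds.
\]
Since the increments $Y_\ell - E[Y_\ell \mid \mathcal{G}]$ lie on disjoint Poisson increments and are therefore orthogonal across $\ell$, the total conditional variance is at most $C\theta^k \nu \int_0^t Z(s)\, ds$. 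Taking expectations using $E[Z(s)] = e^{\lambda s}$ and, where necessary, the covariance estimate $E[Z(s)Z(u)] = O(e^{\lambda(s+u)})$ (finite second moment of the offspring distribution together with the branching property) yields the $C_1 \theta^k t e^{\lambda t}$ term.

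For the Riemann-sum piece,
\[
E[Y_\ell \mid \mathcal{G}] - \hat{Y}_\ell = \nu \int_{I_\ell} \bigl(Z(s) p^k_{j,+}(t-s) - Z(\ell\Delta) p^k_{j,+}(t-\ell\Delta)\bigr)\, ds,
\]
is a deterministic functional of the path. Splitting the integrand by the triangle inequality into an oscillation in $Z$ and an oscillation in $p^k_{j,+}$, I would bound the former using the Markov/branching estimate $E[(Z(s) - Z(\ell\Delta))^2] \leq C\, \Delta\, e^{2\lambda \ell\Delta}$ (a consequence of the finite third-moment hypothesis), and the latter using the boundedness and smoothness of $s \mapsto p^k_{j,+}(t-s)$. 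Squaring, summing over $\ell$ (of which there are $\lfloor t/\Delta\rfloor$), and collecting exponential factors produces the $C_2 \Delta e^{3\lambda t}$ term, the three $e^{\lambda t}$ factors arising from one summation over intervals and one from the second moment of $Z$.

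The principal obstacle is bookkeeping the dependencies among clones when one mutation is a descendant of another: the corresponding indicators are positively correlated, so one cannot naively treat the $Y_\ell$'s as sums of i.i.d.\ Bernoullis. The cleanest resolution is to work throughout in the genealogical filtration $\mathcal{G}$, in which mutations form independent Poisson marks on distinct individuals and descendant subtrees of non-nested individuals are independent by the branching property; the dependencies then manifest only through the single $\mathcal{G}$-measurable path of $Z$, which is handled by the covariance estimates above.
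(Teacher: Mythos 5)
Your decomposition is attractive and genuinely different from the paper's proof (which expands the square of $\sum_\ell\bigl(W^k_{\ell\Delta,t}(j)-\nu\Delta Z(\ell\Delta)p^k_{j,+}(t-\ell\Delta)\bigr)$ directly and devotes two auxiliary lemmas to the off-diagonal terms), but as written it has a gap that conceals exactly the hard part of the lemma. If $\mathcal{G}$ is the \emph{full} genealogy up to time $t$, then for a mutation landing on individual $v$ at time $s$ the event $\{I^i_{j,+}(t)\geq k\}$ is $\mathcal{G}$-measurable: it is determined by the descendant subtree of $v$, which $\mathcal{G}$ records. Campbell's formula therefore gives
\[
E[Y_\ell\mid\mathcal{G}]=\nu\int_{I_\ell}N^k_j(s)\,ds,\qquad N^k_j(s):=\#\{v\text{ alive at }s:\ \text{the subtree of }v\text{ enters size }j\text{ at least }k\text{ times by }t\},
\]
and \emph{not} $\nu\int_{I_\ell}Z(s)\,p^k_{j,+}(t-s)\,ds$; the latter is only obtained after a further expectation over the subtrees. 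Consequently your ``fluctuation piece'' captures only the Poisson sampling noise of where mutations land (which is indeed conditionally Poisson, orthogonal across $\ell$, and contributes $O(\theta^k e^{\lambda t})$ in expectation), while the genuinely difficult term
\[
\nu\int_0^t\bigl(N^k_j(s)-Z(s)\,p^k_{j,+}(t-s)\bigr)\,ds
\]
is silently absorbed into your ``Riemann-sum piece,'' where it is not a discretization error at all and is not controlled by oscillation estimates on $Z$ or on $s\mapsto p^k_{j,+}(t-s)$. This term carries all of the correlations between clones of nested mutations --- precisely the content of the paper's Lemmas \ref{lemma:IgnoreMultipleMuts} and \ref{lemma:I1I2diff}.

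The gap is fillable along your lines, and arguably more cleanly than in the paper: conditional on $\mathcal{F}_s$ the descendant subtrees of the $Z(s)$ individuals alive at time $s$ are i.i.d.\ copies of the process, so $N^k_j(s)$ is conditionally binomial with parameters $Z(s)$ and $p^k_{j,+}(t-s)$, whence $E\bigl[(N^k_j(s)-Z(s)p^k_{j,+}(t-s))^2\bigr]\le p^k_{j,+}(t-s)\,E[Z(s)]\le C\theta^k e^{\lambda s}$ by \eqref{eq:pkbound}; Minkowski's integral inequality then bounds the second moment of the displayed integral by $C\theta^k\bigl(\int_0^t e^{\lambda s/2}\,ds\bigr)^2=O(\theta^k e^{\lambda t})$, well within the budget of the $C_1\theta^k t e^{\lambda t}$ term. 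But this step must be identified and proved; as the proposal stands, the central conditional-expectation identity is wrong and the term responsible for the main difficulty is never bounded.
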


\begin{proof}
    Section \ref{app:lemma1}.
\end{proof}

We next turn to the approximation \eqref{eq:approx2}.
By the triangle inequality and the Cauchy-Schwarz inequality, we can write
\begin{align*}
E\big |\bar{S}^k_{j,+}(t)-\hat{S}^k_{j,+}(t)\big |&\leq \nu \int_0^{t}E\big|Ye^{\lambda s}-Z(s)\big| \,  p^k_{j,+}(t-s)ds\\
&\leq
\nu \int_0^{t}\left(E\left[\left(Ye^{\lambda s}-Z(s)\right)^2\right]\right)^{1/2}p^k_{j,+}(t-s)ds.
\end{align*}
By showing that $E\left[\left(Ye^{\lambda s}-Z(s)\right)^2\right] = C e^{\lambda s}$ for some $C>0$ and applying \eqref{eq:pkbound}, we can obtain the following bound on the $L^1$-error of the approximation  \eqref{eq:approx2}.

\begin{lemma} \label{lemma:approx2}
    \begin{align}
\label{eq:lemmaapprox2}
E\big|\bar{S}^k_{j,+}(t)-\hat{S}^k_{j,+}(t)\big|=O(\theta^ke^{\lambda t/2}).
\end{align}
\end{lemma}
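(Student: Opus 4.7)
The proof structure is already set up in the excerpt: starting from the inequality
\begin{align*}
E\big|\bar{S}^k_{j,+}(t)-\hat{S}^k_{j,+}(t)\big| \leq \nu \int_0^{t}\left(E\left[\left(Ye^{\lambda s}-Z(s)\right)^2\right]\right)^{1/2} p^k_{j,+}(t-s)\,ds,
\end{align*}
the claim follows once we establish two ingredients: (i) the $L^2$-estimate $E[(Ye^{\lambda s} - Z(s))^2] = O(e^{\lambda s})$, and (ii) the geometric bound $p^k_{j,+}(t-s) \leq C\theta^k$ already recorded in \eqref{eq:pkbound}. The plan is to establish (i) directly from martingale and second-moment properties of $(Z(t))$, and then combine it with (ii).

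For (i), the key observation is that $(e^{-\lambda t}Z(t))_{t \geq 0}$ is a uniformly integrable martingale under the finite third-moment assumption (as noted in Section \ref{sec:asymptoticbehavior}), so $E[Y \mid \mathcal{F}_s] = e^{-\lambda s}Z(s)$. Expanding the square and using the tower property to identify the cross term,
\begin{align*}
E\big[(Y - e^{-\lambda s}Z(s))^2\big] = E[Y^2] - E\big[(e^{-\lambda s}Z(s))^2\big].
\end{align*}
Standard second-moment calculations for supercritical Galton-Watson processes (Chapter III.4 of \cite{athreya2004branching}) give $E[Z(s)^2] = e^{2\lambda s} + C_0(e^{2\lambda s} - e^{\lambda s})$ for a constant $C_0$ depending on the offspring distribution, so that $E[(e^{-\lambda s}Z(s))^2] = (1+C_0) - C_0 e^{-\lambda s}$. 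Sending $s \to \infty$ and invoking uniform integrability of $(e^{-\lambda s}Z(s))^2$ (which follows from the $L^2$-bound on $Z(s)$) yields $E[Y^2] = 1 + C_0$. Subtracting gives
\begin{align*}
E\big[(Y - e^{-\lambda s}Z(s))^2\big] = C_0 e^{-\lambda s},
\end{align*}
and multiplying by $e^{2\lambda s}$ produces the desired identity $E[(Ye^{\lambda s} - Z(s))^2] = C_0 e^{\lambda s}$.

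Substituting (i) and (ii) into the main inequality,
\begin{align*}
E\big|\bar{S}^k_{j,+}(t) - \hat{S}^k_{j,+}(t)\big| \leq \nu \sqrt{C_0}\, C\, \theta^k \int_0^t e^{\lambda s/2}\,ds \leq \frac{2\nu \sqrt{C_0}\, C}{\lambda}\, \theta^k e^{\lambda t/2},
\end{align*}
which is $O(\theta^k e^{\lambda t/2})$ as required. The main obstacle is the $L^2$ control of $Y - e^{-\lambda s}Z(s)$; the finite third-moment hypothesis enters precisely to ensure uniform integrability of $(e^{-\lambda s}Z(s))^2$, which is what allows the passage from the finite-$s$ identity to the limit $E[Y^2] = 1 + C_0$. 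Everything else is routine bookkeeping with the triangle and Cauchy-Schwarz inequalities already set up in the excerpt.
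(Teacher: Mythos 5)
Your proposal is correct and follows essentially the same route as the paper: both reduce the bound to the identity $E[(Ye^{\lambda s}-Z(s))^2]=c\,e^{\lambda s}$ via the martingale property $E[Y\mid\mathcal{F}_s]=e^{-\lambda s}Z(s)$, the explicit second-moment formula $E[Z(s)^2]=c_1e^{2\lambda s}-c_2e^{\lambda s}$, and the identification $E[Y^2]=c_1$ from $L^2$ martingale convergence, then integrate against the geometric bound \eqref{eq:pkbound}. (Your appeal to ``uniform integrability of the squares'' is a slightly imprecise justification for $E[(e^{-\lambda s}Z(s))^2]\to E[Y^2]$ --- the clean statement is that an $L^2$-bounded martingale converges in $L^2$ --- but the conclusion is the same one the paper draws from Theorem 1 of IV.11 in \cite{athreya2004branching}.)
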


\begin{proof}
    Section \ref{app:lemma2}.
\end{proof}

Finally, from \eqref{eq:L1approx3}, \eqref{eq:lemmaapprox1} and \eqref{eq:lemmaapprox2}, it is straightforward to obtain a bound on the $L^1$-error of the approximation $S_{j,+}^k(t) \approx \hat{S}_{j,+}^k(t)$, which we state as Proposition \ref{prop:combinedapprox}.

\begin{proposition}
\label{prop:combinedapprox}
\begin{align}
   E\big|S_{j,+}^k(t)- \hat{S}_{j,+}^k(t) \big| = O(\theta^{k/2} t^{1/2} e^{\lambda t/2}).
\end{align}
\end{proposition}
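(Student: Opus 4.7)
The plan is to combine Lemmas \ref{lemma:approx1} and \ref{lemma:approx2} via the triangle inequality, after converting the $L^2$ bound in Lemma \ref{lemma:approx1} into an $L^1$ bound by Jensen's (or Cauchy--Schwarz) inequality. Since Proposition \ref{prop:combinedapprox} is essentially a packaging of the two preceding lemmas, the proof should be short and no serious new obstacle is expected.

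First I would apply the inequality $E|X| \leq (E[X^2])^{1/2}$ to the bound in Lemma \ref{lemma:approx1} to obtain
\[
E\big|S^k_{j,+}(t) - \bar{S}^k_{j,+,\Delta}(t)\big| \leq \big(C_1 \theta^k t e^{\lambda t} + C_2 \Delta e^{3\lambda t}\big)^{1/2}.
\]
Next I would invoke \eqref{eq:L1approx3}, letting $\Delta \to 0$ on the right-hand side. The term $C_2 \Delta e^{3\lambda t}$ vanishes in the limit (for each fixed $t$), leaving
\[
E\big|S_{j,+}^k(t) - \bar{S}^k_{j,+}(t)\big| \leq \big(C_1 \theta^k t e^{\lambda t}\big)^{1/2} = O\!\big(\theta^{k/2} t^{1/2} e^{\lambda t/2}\big).
\]

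Finally I would combine this with Lemma \ref{lemma:approx2} via the triangle inequality:
\[
E\big|S_{j,+}^k(t) - \hat{S}_{j,+}^k(t)\big| \leq E\big|S_{j,+}^k(t) - \bar{S}^k_{j,+}(t)\big| + E\big|\bar{S}^k_{j,+}(t) - \hat{S}^k_{j,+}(t)\big|.
\]
The first term is $O(\theta^{k/2} t^{1/2} e^{\lambda t/2})$ by the previous step, and the second is $O(\theta^k e^{\lambda t/2})$ by Lemma \ref{lemma:approx2}. Since $0 < \theta < 1$, we have $\theta^k \leq \theta^{k/2}$, so the second term is dominated by the first (up to a constant), yielding the claimed bound.

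The only mildly delicate point is justifying the passage to the limit $\Delta \to 0$ in the $L^1$ bound before combining with Lemma \ref{lemma:approx2}, but this is handled directly by \eqref{eq:L1approx3}, which was precisely set up for this purpose. I do not anticipate any serious technical difficulty beyond what has already been done in the preceding lemmas.
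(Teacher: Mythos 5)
Your proposal is correct and follows exactly the route the paper intends: the paper itself states that Proposition \ref{prop:combinedapprox} follows from \eqref{eq:L1approx3}, \eqref{eq:lemmaapprox1} and \eqref{eq:lemmaapprox2}, and your argument (Jensen to pass from $L^2$ to $L^1$, let $\Delta \to 0$ in \eqref{eq:L1approx3} to kill the $C_2\Delta e^{3\lambda t}$ term, then the triangle inequality with $\theta^k \le \theta^{k/2}$) is precisely that computation written out. No gaps.
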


\subsection{Limit theorems for $S_{j,+}(t)$ and $S_{j,-}(t)$} \label{sec:Sjklimits}

To establish limit theorems for $S_{j,+}(t)$ and $S_{j,-}(t)$, we define the approximations
\begin{align*}
    \hat{S}_{j,+}(t) :=  \sum_{k=1}^{\infty}\hat{S}^k_{j,+}(t), \quad \hat{S}_{j,-}(t)  := \sum_{k=1}^{\infty}\hat{S}^k_{j,-}(t).
\end{align*}
Focusing on the former approximation, we first argue that $\lim_{t \to \infty} e^{-\lambda t} \hat{S}_{j,+}(t)$ exists.
Indeed, consider the following calculations for $k \geq 1$ and $t \geq 0$, where we use \eqref{eq:pkbound}:
\begin{align*}
    e^{-\lambda t }\hat{S}_{j,+}^k(t) &=  \nu e^{-\lambda t } \int_0^t Ye^{\lambda s} p_{j,+}^k(t-s) ds \\
    &= \nu Y \int_0^t e^{-\lambda s} p_{j,+}^k(s) ds \\
    &\leq 
    CY
    \theta^k.
\end{align*}
The second equality shows that $t \mapsto  e^{-\lambda t }\hat{S}_{j,+}^k(t)$ is an increasing function for each $k \geq 1$, and the inequality shows that the function is bounded above by the summable sequence 
$C Y \theta^k$.
Therefore, $t \mapsto e^{-\lambda t} \hat{S}_{j,+}(t)$ is increasing and bounded above, which implies that $\lim_{t \to \infty} e^{-\lambda t} \hat{S}_{j,+}(t)$ exists. The limit is given by
\begin{align} \label{eq:Sj+limitexpression}
    \lim_{t \to \infty} e^{-\lambda t} \hat{S}_{j,+}(t) = \nu Y  \int_0^\infty e^{-\lambda s} \left(\sum_{k=1}^\infty p_{j,+}^k(s)\right) ds.
\end{align}
We next note that by the triangle inequality and Proposition \ref{prop:combinedapprox},
\begin{align*}
    E\big|S_{j,+}(t)-\hat{S}_{j,+}(t)\big| &\leq \sum_{k=1}^\infty E\big|S^k_{j,+}(t)-\hat{S}^k_{j,+}(t)\big| = O\big(t^{1/2}e^{\lambda t/2}\big),
\end{align*}
which implies that
\begin{align} \label{eq:finiteintegral}
 \int_0^{\infty} e^{-\lambda t} E\big|S_{j,+}(t)-\hat{S}_{j,+}(t)\big|dt < \infty.
\end{align}
Combining \eqref{eq:finiteintegral} with the fact that $(S_{j,+}(t))_{t \geq 0}$ and $(S_{j,-}(t))_{t \geq 0}$ are increasing processes, 
we can 
establish 
almost sure convergence results 
for $e^{-\lambda t}S_{j,+}(t)$ and $e^{-\lambda t}S_{j,-}(t)$.
In the proof, we adapt an argument of Harris (Theorem 21.1 of \cite{harris1964theory}), with the $L^1$ condition \eqref{eq:finiteintegral} replacing an analogous $L^2$ condition used by Harris.

\begin{proposition}
\label{prop:Sj+(t)convergence}
Conditional on $\Omega_\infty$,
\begin{align*}
& \lim_{t\to\infty} e^{-\lambda t} S_{j,+}(t) 
= \nu Y  \int_0^\infty e^{-\lambda s} \left(\sum_{k=1}^\infty p_{j,+}^k(s)\right) ds, \\
& \lim_{t\to\infty} e^{-\lambda t} S_{j,-}(t) = 
\nu Y  \int_0^\infty e^{-\lambda s} \left(\sum_{k=1}^\infty p_{j,-}^k(s)\right) ds,
\end{align*}
almost surely.
\end{proposition}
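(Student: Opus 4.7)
The plan is to prove the first statement on $S_{j,+}(t)$; the argument for $S_{j,-}(t)$ is identical since Proposition \ref{prop:combinedapprox} and the monotonicity of $S_{j,-}$ are analogous. Set $U(t) := e^{-\lambda t} S_{j,+}(t)$ and $\hat U(t) := e^{-\lambda t} \hat S_{j,+}(t)$, and denote the target limit $L := \nu Y \int_0^\infty e^{-\lambda s} \big(\sum_{k=1}^{\infty} p_{j,+}^k(s)\big) ds$. By \eqref{eq:Sj+limitexpression} we already know that $\hat U(t) \to L$ almost surely, so it is enough to show $U(t) - \hat U(t) \to 0$ almost surely. I will establish this unconditionally on $\Omega$; since $Y = 0$ on $\Omega_\infty^c$ by \eqref{eq:Ydistribution}, the limit reduces to $0$ there, and restricting to $\Omega_\infty$ yields the stated claim.

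\emph{Step 1 (discrete times).} Fix $\delta > 0$ and set $t_n := n\delta$ for $n \geq 0$. The triangle inequality together with Proposition \ref{prop:combinedapprox} gives $E\big|U(t_n) - \hat U(t_n)\big| = e^{-\lambda t_n}\, E\big|S_{j,+}(t_n) - \hat S_{j,+}(t_n)\big| = O\big((n\delta)^{1/2}e^{-\lambda n\delta/2}\big)$, which is summable in $n$. For any $\ve > 0$, Markov's inequality then yields $\sum_n P\big(|U(t_n) - \hat U(t_n)| > \ve\big) < \infty$, so the Borel--Cantelli lemma gives $U(t_n) - \hat U(t_n) \to 0$ almost surely. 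Combined with $\hat U(t_n) \to L$ a.s., this produces $U(t_n) \to L$ almost surely.

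\emph{Step 2 (continuous-time extension via monotonicity).} Since $t \mapsto S_{j,+}(t)$ is nondecreasing, for every $t \in [t_n, t_{n+1}]$ one has $e^{-\lambda\delta}\, U(t_n) = e^{-\lambda t_{n+1}} S_{j,+}(t_n) \leq e^{-\lambda t} S_{j,+}(t) \leq e^{-\lambda t_n} S_{j,+}(t_{n+1}) = e^{\lambda\delta}\, U(t_{n+1})$. Sending $t \to \infty$ and using Step 1, on the full-measure event where $U(t_n), U(t_{n+1}) \to L$ one obtains $e^{-\lambda\delta} L \leq \liminf_{t \to \infty} U(t) \leq \limsup_{t \to \infty} U(t) \leq e^{\lambda\delta} L$. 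Letting $\delta$ range over the countable set $\{1/m : m \geq 1\}$ and intersecting the corresponding null sets yields $\lim_{t \to \infty} U(t) = L$ almost surely, which is exactly the desired conclusion.

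\emph{Main obstacle.} The essential difficulty lies in the continuous-time extension: because $S_{j,+}(t_{n+1})$ can exceed $S_{j,+}(t_n)$ by a nontrivial amount, any crude pointwise bound between sample points would introduce errors that do not vanish. The monotonicity sandwich above, in which the compression factor $e^{\pm \lambda\delta}$ tends to $1$ as the mesh $\delta \to 0$, is precisely the mechanism that lifts the discrete-time convergence to continuous time; this is the heart of the Harris-style argument, with Proposition \ref{prop:combinedapprox} (equivalently \eqref{eq:finiteintegral}) serving as the $L^1$-substitute for the $L^2$-summability used in Theorem 21.1 of \cite{harris1964theory}.
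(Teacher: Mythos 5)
Your proof is correct, and it reaches the conclusion by a genuinely different route from the paper. You discretize time along a lattice $t_n = n\delta$, use the bound $E\lvert e^{-\lambda t}S_{j,+}(t)-e^{-\lambda t}\hat S_{j,+}(t)\rvert = O(t^{1/2}e^{-\lambda t/2})$ from Proposition \ref{prop:combinedapprox} together with Markov and Borel--Cantelli to get almost sure convergence along the skeleton, and then interpolate via the monotonicity sandwich $e^{-\lambda\delta}U(t_n)\leq U(t)\leq e^{\lambda\delta}U(t_{n+1})$, letting $\delta\downarrow 0$ over a countable set. The paper instead argues by contradiction in the style of Harris: if $\limsup_t e^{-\lambda t}S_{j,+}(t)$ exceeded (or $\liminf$ fell below) the limit of $e^{-\lambda t}\hat S_{j,+}(t)$ on a set of positive probability, monotonicity would force the integrand $e^{-\lambda t}\lvert S_{j,+}(t)-\hat S_{j,+}(t)\rvert$ to be bounded away from zero on infinitely many intervals of fixed length, contradicting the almost sure finiteness of $\int_0^\infty e^{-\lambda t}E\lvert S_{j,+}(t)-\hat S_{j,+}(t)\rvert\,dt$ guaranteed by \eqref{eq:finiteintegral}. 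Both arguments rest on the same two pillars (the $L^1$ error bound and the monotonicity of $S_{j,+}$); yours is the more standard and arguably more transparent mechanism, exploiting the exponential decay rate of the error directly through summability along the lattice, while the paper's version needs only the weaker integrability condition \eqref{eq:finiteintegral} and so is slightly more robust to a degradation of the error rate. Your handling of the conditioning (running the argument unconditionally and noting that the limit degenerates to $0$ off $\Omega_\infty$ since $Y=0$ there) is also sound.
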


\begin{proof}
 Section \ref{app:prop2}.
\end{proof}

\subsection{Proof of the fixed-time result \eqref{eq:fixedtimemainresult}}
 \label{sec:finalstepsfixedtime}

To finish the proof of 
the fixed-time result \eqref{eq:fixedtimemainresult},
it suffices to note that by \eqref{eq:p_jdecomp} and
Proposition \ref{prop:Sj+(t)convergence},
\begin{align*}
        \lim_{t \to \infty} e^{-\lambda t} \big({S}_{j,+}(t)- {S}_{j,-}(t)\big) 
    = \nu Y \int_0^\infty e^{-\lambda s} p_j(s) ds.
\end{align*}
Since $S_j(t) = S_{j,+}(t)-S_{j,-}(t)$ by \eqref{eq:Sjtdecomp}, the result follows.

\subsection{Proof of the fixed-size result \eqref{eq:fixedsizemainresult}} \label{sec:fixedsizeproof}

To prove the fixed-size result \eqref{eq:fixedsizemainresult}, we note that by \eqref{eq:fixedtimemainresult}, conditional on $\Omega_\infty$,
\begin{align*}
    \lim_{N \to \infty} e^{-\lambda \tau_N} S_j(\tau_N) = \nu Y \int_0^\infty e^{-\lambda s} p_j(s) ds,
\end{align*}
almost surely.
Since 
$Ne^{-\lambda t_N} = Y$ by \eqref{eq:tNdef},
we also have
\begin{align*}
\lim_{N \to \infty} e^{-\lambda(\tau_N-t_N)} \cdot N^{-1} S_j(\tau_N) &= 
    Y^{-1} \lim_{N \to \infty} e^{-\lambda \tau_N} S_j(\tau_N) \\
    &= \nu \int_0^\infty e^{-\lambda s} p_j(s) ds,
\end{align*}
almost surely.
By Proposition \ref{thm:passageApprox} and the continuous mapping theorem, conditional on $\Omega_\infty$,
\[
\lim_{N \to \infty} e^{-\lambda(\tau_N-t_N)} = 1,
\]
almost surely.
We can therefore conclude that conditional on $\Omega_\infty$,
\begin{align*}
    \lim_{N \to \infty} N^{-1} S_j(\tau_N) =  \nu \int_0^\infty e^{-\lambda s} p_j(s) ds,
\end{align*}
almost surely, which is the desired result.

\section{Application: Estimation of extinction probability and effective mutation rate for birth-death process} \label{sec:application}

We conclude by briefly discussing how for the birth-death process,
our results imply consistent estimators for the extinction probability $p$
and the effective mutation rate $\nu/\lambda$,
given data on the SFS of all mutations found in the population.
While it may in many cases be difficult to sample the entire population, 
these estimators can potentially be applicable for example  to the setting where an entire subclone of cells within a tumor is sampled  \cite{gunnarsson2021exact}
or to an {\em in vitro} setting where a single cell is expanded 2D or 3D culture to a miniaturized version of a tumor.
The estimator for $p$ is based on the long-run proportion of mutations found in one individual.
Recall that by \eqref{eq:proportionofmutfixedtimemorethanjind}, this proportion is the same for the fixed-time and fixed-size SFS.
By setting $j=1$ in \eqref{eq:proportionofmutbd}, the proportion can be written explicitly as (Section \ref{app:estimator})
\begin{align} \label{eq:varphiexpr}
    \varphi_1(p) = \begin{cases} \dfrac12, & p=0, \\ -\dfrac{p+q\log(q)}{p\log(q)}, & 0<p<1, \end{cases}
\end{align}
where we recall that $q=1-p$.
The function $\varphi_1(p)$ is strictly decreasing in $p$ (Section \ref{app:varphidecreasing}) and it takes values in $(0,1/2]$.
If in a given population, 
the proportion of mutations found in one individual is observed to be $x$, we define an estimator for $p$ by applying the inverse function of $\varphi_1$:
\begin{align} \label{eq:estimatordef}
    \widehat{p} = \widehat{p}(x) := \varphi_1^{-1}(x).
\end{align}
Technically, $\varphi_1^{-1}$ is only defined on $(0,1/2]$, whereas the random number $x$ may take any value in $[0,1]$.
This can be addressed by extending the definition of $\varphi_1^{-1}$ so that $\varphi_1^{-1}(x) := \varphi_1^{-1}(1/2) = 0$ for $x>1/2$ and $\varphi_1^{-1}(0) := \lim_{x \to 0^+} \varphi_1^{-1}(x) = 1$.
Since $\varphi_1^{-1}$ so defined is continuous, we can combine \eqref{eq:proportionofmutfixedtime}
and 
\eqref{eq:proportionofmutbd}
with the continuous mapping theorem to see that whether the SFS is observed at a fixed time or a fixed size, the estimator in \eqref{eq:estimatordef} is consistent in the sense that $\widehat{p} \to p$ almost surely as $t \to \infty$ or $N \to \infty$.
In other words, if the population is sufficiently large, its site frequency spectrum can be used to obtain an arbitrarily accurate estimate of $p$.
Then, using the total number of mutations and the current size of the population, an estimate for $\nu/\lambda$ can be derived from \eqref{eq:totalmutfixedtimebd} or \eqref{eq:totalmutfixedsizebd}.
We refer to Section 5 of \cite{gunnarsson2021exact} for a more detailed discussion of this estimator, which includes an application of the estimator to simulated data.

In the preceding discussion, we focused on the proportion of mutations found in one individual for illustration purposes.
The point was to show that it is possible to define consistent estimators for $p$ and $\nu/\lambda$ using the SFS.
If it is difficult to measure the number of mutations found in one individual, one can instead focus on
the proportion of mutations found in $j$ cells out of all mutations found in $\geq j$ cells for some $j>1$, denoted by $\varphi_j(p)$ in \eqref{eq:proportionofmutbdmorethanj}.
As established in Section \ref{app:varphidecreasing}, $\varphi_j(p)$ is strictly decreasing in $p$ for any $j \geq 1$, and it takes values in $(0,1/(j+1)]$.
We can therefore define a consistent estimator for $p$ using the inverse function $\varphi_j^{-1}(p)$.
However, it should be noted that the range of $\varphi_j(p)$ becomes narrower as $j$ increases, which will likely affect the standard deviation of the estimator.

\section{Proofs}

\subsection{Proof of Lemma \ref{lemma:approx1}} \label{app:lemma1}

\begin{proof}
Before considering the quantity of interest $E\big[\left(S^k_{j,+}(t)-\bar{S}^k_{j,+,\Delta}(t)\right)^2\big]$, we perform some preliminary calculations.
Recall that $\mathcal{M}_t$ is the set of mutations generated up until time $t$.
For $\Delta>0$ and any non-negative integer $\ell$ with $\ell\Delta < t$, define $A_{\ell,\Delta}$ to be the set of mutations created in the time interval $\big[\ell\Delta, \min\{(\ell+1)\Delta,t\}\big)$,
and note that 
$$
\mathcal{M}_t=\bigcup_{\ell=0}^{\lfloor t/\Delta\rfloor}A_{\ell,\Delta}.
$$
Define $X_{\ell,\Delta}:=|A_{\ell,\Delta}|$
as the number of mutations created in $\big[\ell\Delta, \min\{(\ell+1)\Delta,t\}\big)$. 
Note that conditional on ${\cal F}_{(\ell+1)\Delta} = \sigma(Z(s); s\leq (\ell+1)\Delta)$,
$$
X_{\ell,\Delta} \sim \mbox{Pois}\left(\nu\int_{\ell\Delta}^{(\ell+1)\Delta}Z(s)ds\right).
$$
Using this fact, we can write
\begin{align} \label{EXell}
    & E[X_{\ell,\Delta}|{\cal F}_{(\ell+1)\Delta}] = \nu\int_{\ell\Delta}^{(\ell+1)\Delta}Z(s)ds = \Delta \nu Z(\ell\Delta) + Y_{\ell,\Delta},
\end{align}
where 
\begin{align*}
    Y_{\ell,\Delta} := \nu\int_{\ell\Delta}^{(\ell+1)\Delta}(Z(s)-Z(\ell\Delta))ds.
\end{align*}
It is straightforward to establish that
\begin{align} \label{eq:EYelldelta}
\begin{split}
& E[Y_{\ell,\Delta}] = E[Z(\ell\Delta)] O(\Delta^2), \\
& E[Y_{\ell,\Delta}Z(\ell\Delta)] = E[Z(\ell\Delta)^2]O(\Delta^2), \\
& E[Y_{\ell,\Delta}^2] = E[Z(\ell\Delta)^2]O(\Delta^3),
\end{split}
\end{align}
from which it follows that
\begin{align} \label{EXell uncond}
E[X_{\ell,\Delta}] = \Delta \nu E[Z(\ell\Delta)] (1+O(\Delta)).
\end{align}
Since
\begin{align} \label{EXellsqcond}
    E[X_{\ell,\Delta}^2|{\cal F}_{(\ell+1)\Delta}] - E[X_{\ell,\Delta}|{\cal F}_{(\ell+1)\Delta}] = E[X_{\ell,\Delta}|{\cal F}_{(\ell+1)\Delta}]^2,
\end{align}
it furthermore follows from \eqref{EXell} and \eqref{eq:EYelldelta} that
\begin{align} \label{EXellsq}
E[X_{\ell,\Delta}^2] - E[X_{\ell,\Delta}]  = \Delta^2 \nu^2 E[Z(\ell\Delta)^2](1+O(\Delta)).     
\end{align}
Recall that for a mutation $i \in {\cal M}_t$, $I_{j,+}^{i}(t)$ is the number of times the clone containing mutation $i$ reaches size $j$ up until time $t$, see \eqref{eq:It+-ijdef}.
Define
$$
W^k_{\ell\Delta,t}(j):=\sum_{i\in A_{\ell,\Delta}}1_{\{I_{j,+}^{i}(t)\geq k\}}
$$
as the number of mutations in $A_{\ell,\Delta}$ whose clones reach size $j$ at least $k$ times up until time $t$.
Note that by the definition of $S_{j,+}^k(t)$ in \eqref{eq:Sj+-kdef},
\begin{align} \label{eq:Sjk+decomp}
    S_{j,+}^k(t) = \sum_{\ell=0}^{\lfloor t/\Delta\rfloor} W^k_{\ell\Delta,t}(j).
\end{align}
For $i\in A_{\ell,\Delta}$, $P(I_{j,+}^{i}(t)\geq k)=p_{j,+}^k(t-\Delta\ell)+O(\Delta)$, where $p_{j,+}^k(t)$ is defined as in \eqref{eq:pjkplusminusdef}.
Here, we note that $p_{j,+}^k(t-s) = p_{j,+}^k(t-\Delta\ell)+O(\Delta)$ for all $s \in [\ell\Delta,(\ell+1)\Delta)$, since there is an $O(\Delta)$ probability of a death event occurring in an interval of length $\Delta$.
Thus, the $O(\Delta)$ term captures the fact that mutations in $A_{\ell,\Delta}$ can occur anywhere within the interval $[\ell\Delta,(\ell+1)\Delta)$, leading to an error in estimating $P(I_{j,+}^{i}(t)\geq k)$ by $p_{j,+}^k(t-\Delta\ell)$.
Therefore, conditional on $X_{\ell,\Delta}$,
$W^k_{\ell\Delta,t}(j)$ is a binomial random variable with parameters $X_{\ell,\Delta}$ and $\pabbrev := p^k_{j,+}(t-\ell\Delta)+O(\Delta)$.
This implies by \eqref{EXell},
\begin{align}
\label{eq:CondMoment}
E[W^k_{\ell\Delta,t}(j)|{\cal F}_{(\ell+1)\Delta}] 
&=E\left[E\left[W^k_{\ell\Delta,t}(j)|X_{\ell,\Delta},{\cal F}_{(\ell+1)\Delta}\right]|{\cal F}_{(\ell+1)\Delta}\right] \nonumber \\
&=\pabbrev E\left[X_{\ell,\Delta}|{\cal F}_{(\ell+1)\Delta}\right]
\nonumber
\\
& =
\Delta \nu \pabbrev Z(\ell\Delta) + \pabbrev Y_{\ell,\Delta}, 
\end{align}
and by \eqref{EXellsq} and \eqref{EXell uncond},
\begin{align}
\label{eq:SecondMoment}
E\left[W^k_{\ell\Delta,t}(j)^2\right] &= \pabbrev^2E\left[X_{\ell,\Delta}^2\right]+\pabbrev\left(1-\pabbrev\right)E\left[X_{\ell,\Delta}\right]\nonumber\\
&=\pabbrev^2(E\left[X_{\ell,\Delta}^2\right]-E\left[X_{\ell,\Delta}\right])+\pabbrev E\left[X_{\ell,\Delta}\right] \nonumber\\
&=
\big(\pabbrev^2\Delta^2\nu^2E\left[Z(\ell\Delta)^2\right]+\pabbrev \Delta \nu E\left[ Z(\ell\Delta)\right]\big)(1+O(\Delta)) \nonumber \\
&= p^k_{j,+}(t-\ell\Delta)^2 \Delta^2\nu^2E\left[Z(\ell\Delta)^2\right] + p^k_{j,+}(t-\ell\Delta) \Delta\nu E\left[Z(\ell\Delta)\right] \nonumber  \\
&\quad + O(\Delta^3) E\left[Z(\ell\Delta)^2\right] + O(\Delta^2) E\left[ Z(\ell\Delta)\right].
\end{align}

We are now ready to begin the main calculations.
First note that by \eqref{eq:approx3} and \eqref{eq:Sjk+decomp},
\begin{align}
\label{eq:SecondMomentDelta}
&E\left[\left(S^k_{j,+}(t)-\bar{S}^k_{j,+,\Delta}(t)\right)^2\right] \nonumber\\
&=
E\left[\left(\sum_{\ell=0}^{\lfloor t/\Delta\rfloor}\left(\nu\Delta Z(\ell\Delta)p^k_{j,+}(t-\ell\Delta)-W^k_{\ell\Delta,t}(j)\right)\right)^2\right] \nonumber\\
&=
\sum_{\ell_2=0}^{\lfloor t/\Delta\rfloor}\sum_{\ell_1=0}^{\lfloor t/\Delta\rfloor}E\left[\left(\nu\Delta Z(\Delta\ell_2)p^k_{j,+}(t-\Delta\ell_2)-W_{\ell_2\Delta,t}^k(j)\right)\right. \nonumber\\
&\qquad\qquad\qquad\;\;\left.\left(\nu\Delta Z(\Delta \ell_1)p^k_{j,+}(t-\Delta \ell_1)-W^k_{\ell_1\Delta,t}(j)\right)\right].
\end{align}
We first consider the diagonal terms in the double sum.
Note that by \eqref{eq:CondMoment} and \eqref{eq:EYelldelta},
\begin{align*}
    E[Z(\ell\Delta) W^k_{\ell\Delta,t}(j)] &= \Delta \nu \pabbrev E[Z(\ell\Delta)^2] + \pabbrev E[Z(\ell\Delta) Y_{\ell,\Delta}] \\
    &= \Delta \nu \pabbrev E[Z(\ell\Delta)^2] (1+O(\Delta)) \\
    &= \Delta \nu p^k_{j,+}(t-\Delta\ell) E[Z(\ell\Delta)^2] + O(\Delta^2) E[Z(\ell\Delta)^2],
\end{align*}
which together with \eqref{eq:SecondMoment} implies that
\begin{align*}
&E\left[\left(\nu\Delta Z(\ell\Delta)p^k_{j,+}(t-\Delta\ell)-W_{\ell\Delta,t}^k(j)\right)^2\right]\\
&=
\nu^2\Delta^2p^k_{j,+}(t-\ell\Delta)^2E[Z(\ell\Delta)^2]-2\nu\Delta p^k_{j,+}(t-\Delta\ell) E[Z(\ell\Delta)W^k_{\ell\Delta,t}(j)]+E[W^k_{\ell\Delta,t}(j)^2] \\
&=
E\left[W^k_{\ell\Delta,t}(j)^2\right]-\nu^2\Delta^2p^k_{j,+}(t-\ell\Delta)^2E[Z(\ell\Delta)^2] + O(\Delta^3) E[Z(\ell\Delta)^2] \\
&=
\nu\Delta p^k_{j,+}(t-\ell\Delta)E[Z(\ell\Delta)] + O(\Delta^3)E[Z(\ell\Delta)^2] + O(\Delta^2) E[Z(\ell\Delta)].
\end{align*}
Since $E[Z(t)] = e^{ \lambda t}$ and $E[Z(t)^2] = O(e^{2\lambda t})$, we can write
\begin{align*}
&E\left[\left(\nu\Delta Z(\ell\Delta)p^k_{j,+}(t-\Delta\ell)-W_{\ell\Delta,t}^k(j)\right)^2\right]\\
&= \nu\Delta p^k_{j,+}(t-\ell\Delta)E[Z(\ell\Delta)] + O\big(\Delta^3 e^{2\lambda\ell\Delta}\big) + O\big(\Delta^2 e^{\lambda\ell\Delta}\big).
\end{align*}
Next, we consider the cross terms for $\ell_1<\ell_2$:
\begin{align*}
&E\left[\left(\nu\Delta Z(\Delta\ell_2)p^k_{j,+}(t-\Delta\ell_2)-W^k_{\ell_2\Delta,t}(j)\right)\left(\nu\Delta Z(\Delta \ell_1)p^k_{j,+}(t-\Delta \ell_1)-W^k_{\ell_1\Delta,t}(j)\right)\right]\\
&=
\nu\Delta p^k_{j,+}(t-\Delta \ell_1)E\left[Z(\Delta \ell_1)\left(\nu\Delta Z(\Delta\ell_2)p^k_{j,+}(t-\Delta\ell_2)-W^k_{\ell_2\Delta,t}(j)\right)\right]\\
&\quad -
E\left[W^k_{\ell_1\Delta,t}(j)\left(\nu\Delta Z(\Delta\ell_2)p^k_{j,+}(t-\Delta\ell_2)-W^k_{\ell_2\Delta,t}(j)\right)\right].
\end{align*}
To analyze the first term in the difference above, note that by \eqref{eq:CondMoment},
\begin{align*}
    & E\left[Z(\Delta \ell_1)\left(\nu\Delta Z(\Delta\ell_2)p^k_{j,+}(t-\Delta\ell_2)-W^k_{\ell_2\Delta,t}(j)\right)\right] \\
    & =E\left[Z(\Delta \ell_1) \big(\nu\Delta Z(\Delta\ell_2)p^k_{j,+}(t-\Delta\ell_2)-E\left[W^k_{\ell_2\Delta,t}(j)|{\cal F}_{(\ell_2+1)\Delta}\right]\big)\right] \\
    &=  -\left(O(\Delta^2) E\left[Z(\Delta \ell_1) Z(\Delta\ell_2)\right] + \pabbrevvv E\left[Z(\Delta\ell_1) Y_{\ell_2,\Delta}\right]\right).
\end{align*}
It is straightforward to show that
\begin{align*}
    E\left[Z(\Delta\ell_1) Y_{\ell_2,\Delta}\right] =  O\left(\Delta^2 e^{\lambda \ell_1\Delta} e^{\lambda \ell_2\Delta}\right),
\end{align*}
and it thus follows from Cauchy-Schwarz that
\begin{align*}
    & E\left[Z(\Delta \ell_1)\left(\nu\Delta Z(\Delta\ell_2)p^k_{j,+}(t-\Delta\ell_2)-W^k_{\ell_2\Delta,t}(j)\right)\right] 
    = O\left(\Delta^2 e^{\lambda \ell_1\Delta} e^{\lambda \ell_2\Delta}\right).
\end{align*}
The cross terms for $\ell_1 < \ell_2$ can therefore be written as
\begin{align*}
&E\left[\left(\nu\Delta Z(\Delta\ell_2)p^k_{j,+}(t-\Delta\ell_2)-W^k_{\ell_2\Delta,t}(j)\right)\left(\nu\Delta Z(\Delta \ell_1)p^k_{j,+}(t-\Delta \ell_1)-W^k_{\ell_1\Delta,t}(j)\right)\right]\\
&=
E\left[W^k_{\ell_1\Delta,t}(j)\left(W^k_{\ell_2\Delta,t}(j)-\nu\Delta Z(\Delta\ell_2)p^k_{j,+}(t-\Delta\ell_2)\right)\right] + O\left(\Delta^3 e^{\lambda \ell_1 \Delta} e^{\lambda \ell_2 \Delta}\right).
\end{align*}
Using the preceding analysis, we can now rewrite \eqref{eq:SecondMomentDelta} as 
\begin{align}
\label{eq:SecondMomentDelta2}
&E\left[\left(S^k_{j,+}(t)-\bar{S}^k_{j,+,\Delta}(t)\right)^2\right] \nonumber \\
&=\nu\Delta\sum_{\ell=0}^{\lfloor t/\Delta\rfloor}p^k_{j,+}(t-\ell\Delta)E[Z(\ell\Delta)]\nonumber\\
&\quad +
2\sum_{\ell_1<\ell_2} E\left[W^k_{\ell_1\Delta,t}(j)\left(W^k_{\ell_2\Delta,t}(j)-\nu\Delta Z(\Delta\ell_2)p^k_{j,+}(t-\Delta\ell_2)\right)\right] + O\left(\Delta e^{2\lambda t}\right),
\end{align}
where we use that
\begin{align*}
    \Delta \sum_{\ell=0}^{\lfloor t/\Delta\rfloor} e^{\lambda \ell \Delta} = \Delta \sum_{\ell=0}^{\lfloor t/\Delta\rfloor} (e^{\lambda \Delta})^{\ell} = \frac{\Delta\left((e^{\lambda\Delta})^{\lfloor t/\Delta\rfloor+1}-1\right)}{e^{\lambda \Delta}-1} 
    \leq \frac{\Delta e^{\lambda t}(1+O(\Delta))}{\lambda \Delta + O(\Delta^2)} 
    \leq Ce^{\lambda t}
\end{align*}
for some constant $C>0$.
The remainder of the proof will focus on bounding the off-diagonal terms
\begin{align} \label{eq:offdiagonalterms}
E\left[W^k_{\ell_1\Delta,t}(j)\left(W^k_{\ell_2\Delta,t}(j)-\nu\Delta Z(\Delta\ell_2)p^k_{j,+}(t-\Delta\ell_2)\right)\right].
\end{align}
We begin with the following lemma, which shows that in the limit as $\Delta\to 0$, we can ignore the possibility of multiple mutations in time intervals of length $\Delta$.

\begin{lemma}
\label{lemma:IgnoreMultipleMuts}
For $\ell_1<\ell_2$, $\Delta>0$ and $t>0$,
\begin{align*}
& E[W^k_{\ell_2\Delta,t}(j)W^k_{\ell_1\Delta,t}(j)] \\
&=
E[W^k_{\ell_2\Delta,t}(j)W^k_{\ell_1\Delta,t}(j);X_{\ell_1,\Delta}=1,X_{\ell_2,\Delta}=1]
+O\left(e^{\lambda\Delta \ell_1}e^{2\lambda \Delta\ell_2}\Delta^3\right)   , \\
& E[Z(\ell_2\Delta)W^k_{\ell_1\Delta,t}(j)]\\
&=E[Z(\ell_2\Delta);X_{\ell_1,\Delta}=1,W^k_{\ell_1\Delta,t}(j)=1]+{O\left(e^{\lambda\Delta \ell_1}e^{2\lambda \Delta\ell_2}\Delta^2\right)}.
\end{align*}
\end{lemma}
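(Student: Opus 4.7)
The plan is to exploit the fact that, conditional on the branching-process path $(Z(s))_{s\geq 0}$, the numbers of new mutations in disjoint time intervals are independent Poisson variables, since neutral mutations do not feed back into $Z$. Specifically, conditional on the full trajectory of $Z$, $X_{\ell_1,\Delta}$ and $X_{\ell_2,\Delta}$ are independent Poisson variables with parameters $\mu_{\ell_i} := \nu\int_{\ell_i\Delta}^{(\ell_i+1)\Delta}Z(s)ds$, and moreover $W^k_{\ell_i\Delta,t}(j)\leq X_{\ell_i,\Delta}$ deterministically. Throughout I use the standard Poisson tail estimate $E[X\,;X\geq 2]=\mu(1-e^{-\mu})\leq \mu^2$ for $X\sim\mathrm{Pois}(\mu)$.

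For the first identity, I would partition on the values of $X_{\ell_1,\Delta}$ and $X_{\ell_2,\Delta}$. The contribution from $\{X_{\ell_1,\Delta}=0\}\cup\{X_{\ell_2,\Delta}=0\}$ vanishes by the inequality $W^k\leq X$, so only the event $\{X_{\ell_1,\Delta}\geq 1, X_{\ell_2,\Delta}\geq 1\}$ contributes, and the remainder is
\begin{align*}
    R := E\!\left[W^k_{\ell_2\Delta,t}(j)W^k_{\ell_1\Delta,t}(j)\,;\,X_{\ell_1,\Delta}\geq 2\text{ or }X_{\ell_2,\Delta}\geq 2\right]
        \leq E[X_{\ell_1,\Delta}X_{\ell_2,\Delta}\,;\,X_{\ell_1,\Delta}\geq 2]+E[X_{\ell_1,\Delta}X_{\ell_2,\Delta}\,;\,X_{\ell_2,\Delta}\geq 2].
\end{align*}
Conditioning on the $Z$-trajectory and using the Poisson tail bound above, each of these two terms is bounded by either $E[\mu_{\ell_1}^2\mu_{\ell_2}]$ or $E[\mu_{\ell_1}\mu_{\ell_2}^2]$. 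Since $\mu_\ell\leq \nu\Delta\max_{[\ell\Delta,(\ell+1)\Delta]}Z$, one obtains (via Jensen/Cauchy-Schwarz on the time integral) moment bounds of the form $E[\mu_{\ell_1}^a\mu_{\ell_2}^b]=O(\Delta^{a+b}E[Z(\ell_1\Delta)^a Z(\ell_2\Delta)^b])$. The key computation is then a tower-property estimate: for $\ell_1<\ell_2$, using $E[Z(\ell_2\Delta)\mid\mathcal{F}_{\ell_1\Delta}]=Z(\ell_1\Delta)e^{\lambda(\ell_2-\ell_1)\Delta}$ together with the standard bounds $E[Z(t)^p]=O(e^{p\lambda t})$ for $p\leq 3$ (which require the finite third-moment assumption for $p=3$ and for bounding the conditional second moment), both $E[Z(\ell_1\Delta)^2 Z(\ell_2\Delta)]$ and $E[Z(\ell_1\Delta)Z(\ell_2\Delta)^2]$ come out $O(e^{\lambda\Delta\ell_1}e^{2\lambda\Delta\ell_2})$, using $\ell_1<\ell_2$ to absorb the less favorable exponent. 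This yields $R=O(e^{\lambda\Delta\ell_1}e^{2\lambda\Delta\ell_2}\Delta^3)$, as required.

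The second identity is handled analogously but is slightly simpler: decomposing on $X_{\ell_1,\Delta}\in\{0,1,\geq 2\}$, the $X_{\ell_1,\Delta}=0$ term vanishes (since $W^k_{\ell_1\Delta,t}(j)\leq X_{\ell_1,\Delta}$), the $X_{\ell_1,\Delta}=1$ term gives the main expression because $W^k\in\{0,1\}$ on that event, and the remainder is controlled by $E[Z(\ell_2\Delta)X_{\ell_1,\Delta}\,;\,X_{\ell_1,\Delta}\geq 2]\leq E[Z(\ell_2\Delta)\mu_{\ell_1}^2]$. Bounding $\mu_{\ell_1}^2\leq \nu^2\Delta\int_{\ell_1\Delta}^{(\ell_1+1)\Delta}Z(s)^2 ds$ by Cauchy-Schwarz and applying the same tower-property/third-moment estimate $E[Z(\ell_2\Delta)Z(s)^2]=O(e^{\lambda\ell_2\Delta}e^{2\lambda s})$ for $s\in[\ell_1\Delta,(\ell_1+1)\Delta]$ gives an error of order $\Delta^2 e^{2\lambda\Delta\ell_1}e^{\lambda\Delta\ell_2}\leq \Delta^2 e^{\lambda\Delta\ell_1}e^{2\lambda\Delta\ell_2}$, where we again use $\ell_1<\ell_2$ in the last step.

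The main obstacle is purely bookkeeping: making sure the conditional-independence structure is invoked cleanly (so that $X_{\ell_1,\Delta}$ and $X_{\ell_2,\Delta}$ decouple given $Z$) and that the integrated rates $\mu_\ell$ are reduced to powers of $Z$ at the grid points $\ell\Delta$ before applying the third-moment growth estimates. Keeping careful track of which exponent ($\ell_1$ vs.\ $\ell_2$) carries the square is essential, and the inequality $\ell_1<\ell_2$ must be used to align with the $e^{\lambda\Delta\ell_1}e^{2\lambda\Delta\ell_2}$ form stated in the lemma.
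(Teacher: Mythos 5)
Your proposal is correct and follows essentially the same route as the paper's proof: both kill the $X_{\ell,\Delta}=0$ contributions via $W^k_{\ell\Delta,t}(j)\leq X_{\ell,\Delta}$, control the events $\{X_{\ell,\Delta}\geq 2\}$ through the Poisson second factorial moment (your bound $E[X;X\geq 2]\leq\mu^2$ is the same estimate as the paper's $X1_{\{X>1\}}\leq X(X-1)$ with $E[X(X-1)\mid\mathcal{F}]=\mu^2$), exploit conditional independence of the mutation counts across disjoint intervals given the population path, and reduce everything to $E[Z(t)^3]=O(e^{3\lambda t})$ via the tower property, using $\ell_1<\ell_2$ to land on the stated exponents. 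The only differences are organizational (you condition on the full trajectory at once and work with the integrated rates $\mu_\ell$ directly, where the paper conditions iteratively on $\mathcal{F}_{\Delta(\ell+1)}$ and splits $E[X_{\ell,\Delta}\mid\mathcal{F}_{\Delta(\ell+1)}]=\nu\Delta Z(\ell\Delta)+Y_{\ell,\Delta}$), and these do not change the substance of the argument.
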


\begin{proof}
    Section \ref{app:lemma3}.
\end{proof}

By Lemma \ref{lemma:IgnoreMultipleMuts}, instead of \eqref{eq:offdiagonalterms} we can study the simpler difference
\begin{align}
\label{eq:OffDiag}
&P(X_{\ell_1,\Delta}=1,W^k_{\ell_1\Delta,t}(j)=1,X_{\ell_2,\Delta}=1,W^k_{\ell_2\Delta,t}(j)=1) \nonumber \\
&-\nu\Delta p^k_{j,+}(t-\Delta\ell_2)E[Z(\ell_2\Delta);X_{\ell_1,\Delta}=1,W^k_{\ell_1\Delta,t}(j)=1].
\end{align}
For ease of notation, define
\begin{align*}
    C_{\ell\Delta,t}^k(j) := \{X_{\ell,\Delta}=1,W^k_{\ell\Delta,t}(j)=1\},
\end{align*}
which is the event that exactly one mutation occurs in $[\ell\Delta,(\ell+1)\Delta)$ and that the clone carrying this mutation reaches size $j$ at least $k$ times up until time $t$.
Also define
\begin{align*}
& I_1(\ell_1,\ell_2):=P\left(C_{\ell_1\Delta,t}^k(j),C_{\ell_2\Delta,t}^k(j)\right), \\
& I_2(\ell_1,\ell_2):=\nu\Delta p^k_{j,+}(t-\Delta\ell_2)E[Z(\ell_2\Delta);C_{\ell_1\Delta,t}^k(j)],
\end{align*}
where we note that \eqref{eq:OffDiag} is $I_1(\ell_1,\ell_2)-I_2(\ell_1,\ell_2)$.
First consider the $I_2(\ell_1,\ell_2)$ term,
\begin{align*}
& \frac{I_2(\ell_1,\ell_2)}{\nu\Delta p^k_{j,+}(t-\Delta\ell_2)} \\
&=E[Z(\ell_2\Delta);C_{\ell_1\Delta,t}^k(j)] \\
&=\sum_{m=1}^{\infty}mP\left(Z(\Delta\ell_2)=m,C_{\ell_1\Delta,t}^k(j)\right)\\
&=
\sum_{m=1}^{\infty}\sum_{n=1}^\infty m P\left(Z(\Delta\ell_2)=m,C_{\ell_1\Delta,t}^k(j),Z(\Delta \ell_1)=n\right)\\
&=
\sum_{n=1}^\infty P(C_{\ell_1\Delta,t}^k(j),Z(\Delta \ell_1)=n) \cdot \sum_{m=1}^{\infty} mP\left(Z(\Delta\ell_2)=m| C_{\ell_1\Delta,t}^k(j),Z(\Delta \ell_1)=n\right),
\end{align*}
which implies that
\begin{align*}
I_2(\ell_1,\ell_2) &=  \nu\Delta p^k_{j,+}(t-\Delta\ell_2) \sum_{n=1}^{\infty} P(C_{\ell_1\Delta,t}^k(j),Z(\Delta \ell_1)=n)\\
&\quad\cdot \sum_{m=1}^\infty mP(Z(\Delta\ell_2)=m|C_{\ell_1\Delta,t}^k(j),Z(\Delta \ell_1)=n).
\end{align*}
Next consider the $I_1(\ell_1,\ell_2)$ term,
\begin{align*}
I_1(\ell_1,\ell_2 &)=P(C_{\ell_1\Delta,t}^k(j),C_{\ell_2\Delta,t}^k(j))\\
&=
\sum_{n=1}^\infty P(C_{\ell_2\Delta,t}^k(j)|Z(\Delta \ell_1)=n,C_{\ell_1\Delta,t}^k(j)) P(C_{\ell_1\Delta,t}^k(j),Z(\Delta \ell_1)=n) \\
&=
\sum_{n=1}^\infty P(C_{\ell_1\Delta,t}^k(j),Z(\Delta \ell_1)=n) \\
&\quad\cdot \sum_{m=1}^\infty P(C_{\ell_2\Delta,t}^k(j)|Z(\Delta\ell_2)=m,Z(\Delta \ell_1)=n,C_{\ell_1\Delta,t}^k(j))\\
&\qquad\quad\;\cdot P(Z(\Delta\ell_2)=m|C_{\ell_1\Delta,t}^k(j),Z(\Delta \ell_1)=n).
\end{align*}
We can therefore write
\begin{align} \label{eq:I1I2diffexpr}
& I_1(\ell_1,\ell_2)-I_2(\ell_1,\ell_2) \nonumber \\
&=
\sum_{n=1}^\infty P(C_{\ell_1\Delta,t}^k(j),Z(\Delta \ell_1)=n) \nonumber \\
&\quad\cdot \sum_{m=1}^\infty P(Z(\Delta\ell_2)=m|C_{\ell_1\Delta,t}^k(j),Z(\Delta \ell_1)=n) \nonumber \\
&\qquad\quad\;\cdot
\left(P(C_{\ell_2\Delta,t}^k(j)|Z(\Delta\ell_2)=m,Z(\Delta \ell_1)=n,C_{\ell_1\Delta,t}^k(j))-m\nu\Delta p^k_{j,+}(t-\Delta\ell_2)\right).
\end{align}
We can use \eqref{eq:I1I2diffexpr} to show that there exists a constant $C>0$ so that
\begin{align} \label{eq:I1I2diffexprtoshow}
    I_1(\ell_1,\ell_2)-I_2(\ell_1,\ell_2) \leq 
    C\Delta^2\theta^ke^{\lambda \Delta\ell_2},
\end{align}
where $\theta$ is obtained from \eqref{eq:pkbound}.
The proof is deferred to the following lemma.

\begin{lemma} \label{lemma:I1I2diff}
For $\ell_1<\ell_2$, $\Delta>0$ and $t>0$, \eqref{eq:I1I2diffexprtoshow} holds.
\end{lemma}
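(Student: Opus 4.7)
The plan is to estimate the bracketed difference
\[
R_{n,m}:=P\bigl(C^k_{\ell_2\Delta,t}(j)\bigm|Z(\Delta\ell_2)=m,Z(\Delta\ell_1)=n,C^k_{\ell_1\Delta,t}(j)\bigr)-m\nu\Delta\, p^k_{j,+}(t-\Delta\ell_2)
\]
appearing on the right-hand side of the identity \eqref{eq:I1I2diffexpr}, and then sum the resulting bound against the joint distribution of $(Z(\Delta\ell_1),Z(\Delta\ell_2))$ on the event $C^k_{\ell_1\Delta,t}(j)$. The key observation is that, by the strong Markov property of $Z$ at time $\ell_2\Delta$ together with the branching property, the conditional probability on the left of $R_{n,m}$ depends on the conditioning only through the path of $Z$ on $[\ell_2\Delta,(\ell_2+1)\Delta)$. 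Specifically, conditional on this path and on $Z(\Delta\ell_2)=m$, the count $X_{\ell_2,\Delta}$ is Poisson with mean $\mu:=\nu\int_{\ell_2\Delta}^{(\ell_2+1)\Delta}Z(s)\,ds$, and given a mutation at time $T\in[\ell_2\Delta,(\ell_2+1)\Delta)$ its clone evolves as a fresh branching process started from $1$, so that $W^k_{\ell_2\Delta,t}(j)=1$ with conditional probability $p^k_{j,+}(t-T)$, independently of all other clones (including the $\ell_1$-clone).

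I would then Taylor-expand $\mu e^{-\mu}=\mu+O(\mu^2)$ and write $\mu=\nu m\Delta+\nu\int_{\ell_2\Delta}^{(\ell_2+1)\Delta}(Z(s)-m)\,ds$. Using the standard estimates $E[(Z(s)-Z(\Delta\ell_2))\mid Z(\Delta\ell_2)=m]=O(m\Delta)$ and $E[(Z(s)-Z(\Delta\ell_2))^2\mid Z(\Delta\ell_2)=m]=O(m\Delta)$ for $s\in[\ell_2\Delta,(\ell_2+1)\Delta)$, one separates $\mu e^{-\mu}$ into a leading piece $\nu m\Delta$ and an $L^1$-remainder of size $O(m\Delta^2+m^2\Delta^2)$. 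Combining with the monotonicity bound $p^k_{j,+}(t-T)\le p^k_{j,+}(t-\Delta\ell_2)\le C\theta^k$ from \eqref{eq:pkbound}, and absorbing the substitution error $p^k_{j,+}(t-T)-p^k_{j,+}(t-\Delta\ell_2)$ into the $\theta^k$ prefactor using the monotonicity of $p^k_{j,+}$, one arrives at an estimate of the form
\[
R_{n,m}\le C\theta^k\Delta^2(m+m^2).
\]

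Inserting this into \eqref{eq:I1I2diffexpr} and summing in $n$ and $m$ reduces the lemma to estimating the truncated moments $E\bigl[Z(\Delta\ell_2)^r;C^k_{\ell_1\Delta,t}(j)\bigr]$ for $r\in\{1,2\}$. These are in turn controlled by the trivial inclusion $1_{C^k_{\ell_1\Delta,t}(j)}\le X_{\ell_1,\Delta}$, the formula $E[X_{\ell_1,\Delta}\mid Z]=\nu\int_{\ell_1\Delta}^{(\ell_1+1)\Delta}Z(s)\,ds$, and standard mixed-moment estimates $E[Z(s_1)^{p_1}Z(s_2)^{p_2}]=O(e^{\lambda(p_1s_1+p_2s_2)})$ for $s_1\le s_2$ available from the finite third moment of the offspring distribution, yielding the claimed bound $C\Delta^2\theta^k e^{\lambda\Delta\ell_2}$ after simplification.

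The main obstacle is the middle step: the Taylor expansion of $\mu e^{-\mu}$ must be executed carefully enough to simultaneously track the second-order Poisson correction $O(\mu^2)$, the conditional drift of $Z$ across the length-$\Delta$ interval $[\ell_2\Delta,(\ell_2+1)\Delta)$, and the timing error $p^k_{j,+}(t-T)-p^k_{j,+}(t-\Delta\ell_2)$, and to collapse all three contributions into a single uniform estimate $C\theta^k\Delta^2(m+m^2)$ that is independent of $n$. Once this is done, the truncated-moment bounds in the last step are routine consequences of the Markov property.
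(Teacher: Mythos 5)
Your overall architecture (estimate the bracketed difference $R_{n,m}$ in \eqref{eq:I1I2diffexpr}, then sum against the joint law of $(Z(\Delta\ell_1),Z(\Delta\ell_2))$ on the event $C^k_{\ell_1\Delta,t}(j)$) matches the paper's, but the central claim you use to estimate $R_{n,m}$ is false, and it is precisely the point the lemma has to work around. You assert that, given the path of $Z$ on $[\ell_2\Delta,(\ell_2+1)\Delta)$, a mutation created at time $T$ in that interval founds a clone that ``evolves as a fresh branching process started from $1$, \ldots independently of all other clones (including the $\ell_1$-clone),'' so that $W^k_{\ell_2\Delta,t}(j)=1$ with probability $p^k_{j,+}(t-T)$ even under the conditioning on $C^k_{\ell_1\Delta,t}(j)$. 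This fails whenever the new mutation lands on an individual belonging to the clone of the $\ell_1$-mutation: the two clones are then nested, not disjoint, and $C^k_{\ell_1\Delta,t}(j)$ is an event about the \emph{future} trajectory of the enclosing clone on $[\ell_2\Delta,t]$. Conditioning on it tilts the law of every subclone, so the fate of the inner mutation is not governed by the unconditioned probability $p^k_{j,+}(t-T)$. Consequently your estimate $R_{n,m}\le C\theta^k\Delta^2(m+m^2)$ is unsupported in the nested case, and the contribution you are discarding is not lower order: after the exact cancellation of the $(m-i)\nu\Delta p^k_{j,+}(t-\Delta\ell_2)$ piece coming from mutations \emph{outside} the $\ell_1$-clone against the subtracted $m\nu\Delta p^k_{j,+}(t-\Delta\ell_2)$, the nested-clone term is essentially all of $I_1(\ell_1,\ell_2)-I_2(\ell_1,\ell_2)$.

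The paper's proof is built around exactly this obstruction. It introduces the event $A$ that the second mutation falls inside the first clone and the clone-size variable $B_{\ell_1\Delta}(\ell_2\Delta)=i$; on $A^c$ the independence you want does hold and yields $(m-i)\nu\Delta p^k_{j,+}(t-\Delta\ell_2)$, while on $A$ it removes the conditioning on the future event $\{D^j_{\ell_1\Delta}(t)\ge k\}$ at the cost of a ratio $P(B_{\ell_1\Delta}(\ell_2\Delta)=i\mid\cdot\,,C^k_{\ell_1\Delta,t}(j))\big/P(D^j_{\ell_1\Delta}(t)\ge k\mid\cdot\,,B_{\ell_1\Delta}(\ell_2\Delta)=i)$, which Bayes' rule converts into the unconditioned joint probability $P(Z(\Delta\ell_2)=m,Z(\Delta\ell_1)=n,X_{\ell_1,\Delta}=1,B_{\ell_1\Delta}(\Delta\ell_2)=i)$; the resulting triple sum is then computed exactly and gives $\nu^2\Delta^2 p^k_{j,+}(t-\Delta\ell_2)e^{\lambda\Delta\ell_2}\le C\Delta^2\theta^k e^{\lambda\Delta\ell_2}$. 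To repair your argument you would need to add this case analysis (or an equivalent device for decoupling the inner clone from the conditioning on the outer clone's future); the Poisson second-order correction and the timing error $p^k_{j,+}(t-T)-p^k_{j,+}(t-\Delta\ell_2)$ that you do treat are genuinely the routine part.
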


\begin{proof}
    Section \ref{app:lemma4}.
\end{proof}

Returning to \eqref{eq:SecondMomentDelta2}, we can finally use Lemmas \ref{lemma:IgnoreMultipleMuts} and \ref{lemma:I1I2diff} to conclude that there exist positive constants $C_1$, $C_2$ and $C_3$ such that
\begin{align*}
E\left[\left(S^k_{j,+}(t)-\bar{S}^k_{j,+,\Delta}(t)\right)^2\right]&=\nu\Delta\sum_{\ell=0}^{\lfloor t/\Delta\rfloor}p^k_{j,+}(t-\ell\Delta)E[Z(\ell\Delta)]\\
&\quad +
2\sum_{\ell_1<\ell_2}\left(I_1(\ell_1,\ell_2)-I_2(\ell_1,\ell_2)\right)+C_3\Delta e^{3\lambda t}\\
&\leq 
C_1 \theta^ke^{\lambda t}+C_2\theta^kte^{\lambda t}+C_3\Delta e^{3\lambda t},
\end{align*}
where
we use \eqref{eq:pkbound}
in the final step.
This concludes the proof.
\end{proof}

\subsection{Proof of Lemma \ref{lemma:approx2}} \label{app:lemma2}

\begin{proof}
Using that $E[Y|{\cal F}_s] = e^{-\lambda s}Z(s)$, see Section \ref{sec:asymptoticbehavior},
we begin by writing
\begin{align*}
&E\left[\left(Ye^{\lambda s}-Z(s)\right)^2\right]\\
&=
E[Z(s)^2]-2e^{\lambda s}E[YZ(s)]+e^{2\lambda s}E[Y^2]\\
&=
e^{2\lambda s}E[Y^2]-E[Z(s)^2].
\end{align*}
From expression (5) of Chapter III.4 of \cite{athreya2004branching}, we know there exist positive constants $c_1$ and $c_2$ such that
\begin{align}
\label{eq:SecondMomentBP}
E[Z(s)^2]=c_1e^{2\lambda s}-c_2e^{\lambda s}.
\end{align}
If we establish that $E[Y^2]=c_1$,
then it will follow that
\begin{align}
\label{eq:secondmomentbound}
E\left[\left(Ye^{\lambda t}-Z(t)\right)^2\right]=c_2e^{\lambda t},
\end{align}
which is what we need to prove Lemma \ref{lemma:approx2}.
To this end, note that Theorem 1 of IV.11 in \cite{athreya2004branching} implies that $E[(Z(t)e^{-\lambda t})^2]\to E[Y^2]$ as $t\to\infty$. And from \eqref{eq:SecondMomentBP}, we know that 
$$
\lim_{t\to\infty}e^{-2\lambda t}E[Z(t)^2]=c_1.
$$
Therefore, $E[Y^2] = c_1$, which concludes the proof.
\end{proof}

\subsection{Proof of Proposition \ref{prop:Sj+(t)convergence}} \label{app:prop2}

\begin{proof}
Since $S_{j,+}(t)$ is increasing in $t$, 
\begin{align*}
    e^{-\lambda(t+\tau)}S_{j,+}(t+\tau)\geq e^{-\lambda \tau}e^{-\lambda t}S_{j,+}(t), \quad t,\tau\geq 0.
\end{align*}
In Section \ref{sec:Sjklimits}, 
it is shown that
$\hat{S} := \lim_{t \to \infty} e^{-\lambda t}\hat{S}_{j,+}(t)$ exists,
and the limit is positive on $\Omega_\infty$ since $Y>0$,
see \eqref{eq:Sj+limitexpression}.
Suppose there is an $\omega \in \Omega_\infty$ such that 
\begin{align} \label{eq:suppose1}
    \limsup\limits_{t\to\infty}e^{-\lambda t}S_{j,+}(t,\omega) >  
\hat{S}(\omega).
\end{align}
For notational convenience, we will drop the $\omega$ in what follows.
If \eqref{eq:suppose1} is true, there is a $\delta>0$ and a sequence of real numbers $t_1<t_2<\ldots$ such that $t_{i+1} - t_i>\delta/\lambda(2+2\delta)$ and  $e^{-\lambda t_i}S_{j,+}(t_i) > \hat{S}(1+\delta)$ for $i = 1,2,\ldots$.
Then
\begin{equation}
      e^{-\lambda(t_i+\tau)}S_{j,+}(t_i+\tau)\geq e^{-\lambda \tau}e^{-\lambda t_i}S_{j,+}(t_i)\geq (1-\lambda \tau)\hat{S}(1+\delta).
\end{equation}
Also, there exists $t_0$ so that for $t>t_0$, $$e^{-\lambda t}\hat{S}_{j,+}(t)< \hat{S}(1+\delta/2).$$
Therefore, for $t_i>t_0$,
\begin{align*}
    & \int_{t_i}^{t_{i+1}} \left| e^{-\lambda t}S_{j,+}(t)-e^{-\lambda t}\hat{S}_{j,+}(t)\right|dt \geq \int_{t_i}^{t_i+\delta/\lambda(2+2\delta)} \left | e^{-\lambda t}S_{j,+}(t)-e^{-\lambda t}\hat{S}_{j,+}(t)\right|dt\\
    & \geq \int_{t_i}^{t_i+\delta/\lambda(2+2\delta)}\left(e^{-\lambda t}S_{j,+}(t)-e^{-\lambda t}\hat{S}_{j,+}(t)\right)dt\\
    & \geq \hat{S}\int_{0}^{\delta/\lambda(2+2\delta)}\left( (1-\lambda\tau)(1+\delta)-(1+\delta/2)\right)d\tau \\
    &= \hat{S} \cdot \frac{\delta^2}{8\lambda(1+\delta)},
\end{align*}
from which it follows that
\begin{equation*}
    \int_{0}^{\infty} \left| e^{-\lambda t}S_{j,+}(t)-e^{-\lambda t}\hat{S}_{j,+}(t)\right|dt = \infty.
\end{equation*}
By
\eqref{eq:finiteintegral}, we see that the inequality \eqref{eq:suppose1} cannot hold on a set of positive probability.

Now suppose that
\begin{align} \label{eq:suppose2}
    \liminf\limits_{t\to\infty}e^{-\lambda t}S_{j,+}(t,\omega) <  
\hat{S}(\omega)
\end{align} 
for some $\omega \in \Omega_\infty$.
Then there is a sequence of real numbers $t_1<t_2<\ldots$ with $t_{i+1}-t_i>\delta/\lambda(2-\delta)$ and a real number $0< \delta<1$ such that $e^{-\lambda t_i}S_{j,+}(t_i)<(1-\delta)\hat{S}$. Therefore, 
\begin{equation}
    e^{-\lambda (t_i-\tau)}S_{j,+}(t_i-\tau)\leq (1-\delta)\hat{S}e^{\lambda \tau}\leq \frac{(1-\delta)\hat{S}}{1-\lambda\tau}, \quad 0\leq \tau < 1/\lambda.
\end{equation}
Also,
there exists $t_0$ so that for $t>t_0$, 
$$e^{-\lambda t}\hat{S}_{j,+}(t)> (1-\delta/2)\hat{S}.$$
Therefore,
\begin{align*}
    &\int_{t_i}^{t_{i+1}} \left|e^{-\lambda t}S_{j,+}(t) - e^{-\lambda t}\hat{S}_{j,+}(t)\right|dt \geq \int_{t_{i+1}-\delta/\lambda(2-\delta)}^{t_{i+1}} \left|e^{-\lambda t}S_{j,+}(t) - e^{-\lambda t}\hat{S}_{j,+}(t)\right|dt \\
    & \geq  \int_{t_{i+1}-\delta/\lambda(2-\delta)}^{t_{i+1}} \left(  e^{-\lambda t}\hat{S}_{j,+}(t)-e^{-\lambda t}S_{j,+}(t)\right)dt \\
    & \geq \hat{S}\int_{0}^{\delta/\lambda(2-\delta)} \left(  (1-\delta/2)-(1-\delta)/(1-\lambda \tau)\right)d\tau \\
    & = \hat{S} \left(\frac{\delta}{2\lambda}+\frac{1-\delta}{\lambda}\log \frac{2-2\delta}{2-\delta}\right),
\end{align*}
where we can verify that $\frac{\delta}{2\lambda}+\frac{1-\delta}{\lambda}\log \frac{2-2\delta}{2-\delta}>0 $ when $\delta<1$.
Hence 
\begin{equation*}
    \int_{0}^{\infty} \left| e^{-\lambda t}S_{j,+}(t)-e^{-\lambda t}\hat{S}_{j,+}(t)\right|dt = \infty,
\end{equation*}
which allows us to conclude that \eqref{eq:suppose2} cannot hold on a set of positive probability.

We can now conclude that  on $\Omega_\infty$, 
\begin{align*}
    \lim_{t \to \infty} e^{-\lambda t}S_{j,+}(t) = \hat{S}
\end{align*}
almost surely, which is the desired result. \qedhere
\end{proof}

\subsection{Proof of Lemma \ref{lemma:IgnoreMultipleMuts}} \label{app:lemma3}

\begin{proof}
We begin with the proof of the first statement.
It suffices to show that
\begin{align*}
&E[W^k_{\ell_2\Delta,t}(j)W^k_{\ell_1\Delta,t}(j); X_{\ell_2,\Delta}>1]+E[W^k_{\ell_2\Delta,t}(j)W^k_{\ell_1\Delta,t}(j); X_{\ell_1,\Delta}>1] \\
&=O\left(e^{\lambda\Delta \ell_1}e^{2\lambda \Delta\ell_2}\Delta^3\right),    
\end{align*}
with $\ell_1<\ell_2$.
We will only show that the first term satisfies the bound, the proof for the second term being largely the same. We first 
note that since $W^k_{\ell_1\Delta,t}(j)\leq X_{\ell_1,\Delta}$, 
\begin{align*}
&E[W^k_{\ell_2\Delta,t}(j)W^k_{\ell_1\Delta,t}(j)1_{\{X_{\ell_2,\Delta}>1\}}] \\
&=E\left[E[W^k_{\ell_2\Delta,t}(j)W^k_{\ell_1\Delta,t}(j)1_{\{X_{\ell_2,\Delta}>1\}}|\mathcal{F}_{\Delta (\ell_1+1)}]\right]\\
&\leq
E\left[E[X_{\ell_1,\Delta}W^k_{\ell_2\Delta,t}(j)1_{\{X_{\ell_2,\Delta}>1\}}|\mathcal{F}_{\Delta (\ell_1+1)}]\right]\\
&=
E\left[E\left[X_{\ell_1,\Delta}\rvert\mathcal{F}_{\Delta (\ell_1+1)}\right]E\left[W^k_{\ell_2\Delta,t}(j)1_{\{X_{\ell_2,\Delta}>1\}}\Big|\mathcal{F}_{\Delta (\ell_1+1)}\right]\right].
\end{align*}
In the final equality, we use that the number of mutations created in the interval $[\Delta \ell_1, \Delta \ell_1 +\Delta)$ is independent of the number of mutations created in $[\Delta\ell_2, \Delta\ell_2+\Delta)$ and their fate, given the population size up until time $\Delta (\ell_1+1)$. Therefore, using \eqref{EXell}, $W^k_{\ell_2\Delta,t}(j)\leq X_{\ell_2,\Delta}$ and \eqref{EXellsqcond},
\begin{align*}
&E[W^k_{\ell_2\Delta,t}(j)W^k_{\ell_1\Delta,t}(j)1_{\{X_{\ell_2,\Delta}>1\}}] \\
&\leq
E\left[\left(\nu\Delta Z(\Delta \ell_1)+Y_{\ell_1,\Delta}\right)E\left[E\left[W^k_{\ell_2\Delta,t}(j)1_{\{X_{\ell_2,\Delta}>1\}}\Big|\mathcal{F}_{\Delta(\ell_2+1)}\right]\Big|\mathcal{F}_{\Delta (\ell_1+1)}\right]\right]\\
&\leq  E\left[\left(\nu\Delta Z(\Delta \ell_1)+Y_{\ell_1,\Delta}\right) E\left[E\left[X_{\ell_2,\Delta}1_{\{X_{\ell_2,\Delta}>1\}}\Big|\mathcal{F}_{\Delta(\ell_2+1)}\right]\Big|\mathcal{F}_{\Delta (\ell_1+1)}\right]\right]\\
&\leq
 E\left[\left(\nu\Delta Z(\Delta \ell_1)+Y_{\ell_1,\Delta}\right) E\left[E\left[X_{\ell_2,\Delta}(X_{\ell_2,\Delta}-1)\Big|\mathcal{F}_{\Delta(\ell_2+1)}\right]\Big|\mathcal{F}_{\Delta (\ell_1+1)}\right]\right]\\
&= 
E\left[\left(\nu\Delta Z(\Delta \ell_1)+Y_{\ell_1,\Delta}\right) E\left[\left(\nu\Delta Z(\Delta \ell_2)+Y_{\ell_2,\Delta}\right)^2|\mathcal{F}_{\Delta (\ell_1+1)}\right]\right].
\end{align*}
We will only show that
\begin{align*}
    \nu^3 \Delta^3 E\left[Z(\Delta \ell_1)E\left[Z(\Delta\ell_2)^2|\mathcal{F}_{\Delta (\ell_1+1)}\right]\right] = O\left(e^{\lambda \Delta \ell_1} e^{2\lambda \Delta \ell_2} \Delta^3\right),
\end{align*}
since the terms involving $Y_{\ell_1,\Delta}$ and $Y_{\ell_2,\Delta}$ can be handled similarly.
To that end, note that for $s \leq t$,
\begin{align*}
&E[Z(t)^2|\mathcal{F}_{s}]=e^{2\lambda(t-s)}Z(s)^2+\Var\left(Z(t-s)\right)Z(s),
\end{align*}
which implies that
\begin{align*}
&\nu^3 \Delta^3 E\left[Z(\Delta \ell_1)E\left[Z(\Delta\ell_2)^2|\mathcal{F}_{\Delta (\ell_1+1)}\right]\right] \\
&\leq
\nu^3\Delta^3e^{2\lambda\Delta(\ell_2-\ell_1-1)}E[Z(\Delta \ell_1)Z(\Delta (\ell_1+1))^2]\\
&\quad+\nu^3\Delta^3\Var\left(Z(\Delta(\ell_2-\ell_1-1))\right)E[Z(\Delta \ell_1)Z(\Delta (\ell_1+1))] \\
&= \nu^3\Delta^3e^{2\lambda\Delta(\ell_2-\ell_1)}E[Z(\Delta \ell_1)^3]\\
&\quad+ \nu^3\Delta^3e^{2\lambda\Delta(\ell_2-\ell_1-1)}\Var(Z(\Delta))E[Z(\Delta \ell_1)^2]\\
&\quad+ \nu^3\Delta^3\Var\left(Z(\Delta(\ell_2-\ell_1-1))\right)e^{\lambda\Delta}E[Z(\Delta \ell_1)^2].
\end{align*}
The desired result now follows from the assumption that the offspring distribution has a finite third moment and thus $E[Z(t)^3]=O\left(e^{3\lambda t}\right)$ by Lemma 5 of \cite{foo2014escape}.

{
For the second statement, the proof is largely the same. 
Since
\begin{align*}
    & E[Z(\ell_2\Delta)W^k_{\ell_1\Delta,t}(j)] \\
    & = E[Z(\ell_2\Delta);W^k_{\ell_1\Delta,t}(j)=1, X_{\ell_1,\Delta}=1]+ E[Z(\ell_2\Delta)W^k_{\ell_1\Delta,t}(j);X_{\ell_1,\Delta}>1],
\end{align*}
we need to show that
\begin{align*}
E[Z(\ell_2\Delta)W^k_{\ell_1\Delta,t}(j);X_{\ell_1,\Delta}>1] = O\left(e^{\lambda\Delta \ell_1}e^{2\lambda \Delta\ell_2}\Delta^2\right).
\end{align*}
To that end, we note that
\begin{align*}
E[Z(\ell_2\Delta)W^k_{\ell_1\Delta,t}(j);X_{\ell_1,\Delta}>1] &= E\left[E\left[Z(\ell_2\Delta)W^k_{\ell_1\Delta,t}(j)1_{\{X_{\ell_1,\Delta}>1\}}\Big|\mathcal{F}_{\Delta (\ell_1+1)}\right]\right]\\
& \leq  E\left[E\left[Z(\ell_2\Delta)X_{\ell_1,\Delta}1_{\{X_{\ell_1,\Delta}>1\}}\Big|\mathcal{F}_{\Delta (\ell_1+1)}\right]\right]\\
& = E\left[E\left[X_{\ell_1,\Delta}1_{\{X_{\ell_1,\Delta}>1\}}\Big|\mathcal{F}_{\Delta (\ell_1+1)}\right]E\left[Z(\ell_2\Delta)\big|\mathcal{F}_{\Delta (\ell_1+1)}\right]\right]\\
& \leq E\left[E\left[X_{\ell_1,\Delta}(X_{\ell_1,\Delta}-1)\big|\mathcal{F}_{\Delta (\ell_1+1)}\right]E\left[Z(\ell_2\Delta)\big|\mathcal{F}_{\Delta (\ell_1+1)}\right]\right]\\
& = E\left[\left(\nu\Delta Z(\Delta \ell_1)+Y_{\ell_1,\Delta}\right)^2Z(\Delta (\ell_1+1))e^{\lambda(\ell_2-\ell_1-1)\Delta}\right].
\end{align*}
The first inequality holds because $W^k_{\ell_1\Delta,t}(j)\leq X_{\ell_1,\Delta}$. The second equality holds because $X_{\ell_1,\Delta}$ and $Z(\ell_2\Delta)$ are independent conditional on $\mathcal{F}_{\Delta (\ell_1+1)}$. The last equality is obtained from \eqref{EXellsqcond} and \eqref{EXell}.  We will only show that
\begin{align*}
\nu^2\Delta^2E\left[Z(\Delta \ell_1)^2Z(\Delta (\ell_1+1)) \right]e^{\lambda(\ell_2-\ell_1-1)\Delta} = O\left(e^{\lambda\Delta \ell_1}e^{2\lambda \Delta\ell_2}\Delta^2\right)
\end{align*}
since the terms involving $Y_{\ell_1,\Delta}$ can be handled similarly.
For this, it suffices to note that
\begin{align*}
    E\left[Z(\Delta \ell_1)^2Z(\Delta (\ell_1+1)) \right]e^{\lambda(\ell_2-\ell_1-1)\Delta} &= E\left[Z(\Delta \ell_1)^3 \right]e^{\lambda(\ell_2-\ell_1)\Delta}\\
    & = O\left(e^{2\lambda\Delta \ell_1}e^{\lambda \Delta\ell_2}\right) \\
    &= O\left(e^{\lambda\Delta \ell_1}e^{2\lambda \Delta\ell_2}\right).
    \qedhere
\end{align*}
}

\end{proof}

\subsection{Proof of Lemma \ref{lemma:I1I2diff}} \label{app:lemma4}

\begin{proof}
Let $\ell$ be a positive integer and let $s>0$ such that $\ell\Delta+\Delta< s$.
On the event $\{X_{\ell,\Delta}=1\}$, define $D^j_{\ell\Delta}(s)$ to be the number of disjoint time intervals in $[0,s]$ that the mutation created in the interval $[\ell\Delta,(\ell+1)\Delta)$ is present in $j$ individuals, and let $B_{\ell\Delta}(s)$ be the number of individuals alive at time $s$
descended from that mutation.
Note that 
\begin{align*}
C_{\ell\Delta,t}^k(j) = \{X_{\ell,\Delta}=1, W^k_{\ell\Delta,t}(j)=1\} = \{X_{\ell,\Delta}=1,D^j_{\ell\Delta}(t)\geq k\}.
\end{align*}
On $\{X_{\ell_1,\Delta}=1,X_{\ell_2,\Delta}=1\}$ with $\ell_1<\ell_2$, let $A$ denote the event 
that the mutation created in $[\ell_2\Delta,(\ell_2+1)\Delta)$ occurs in
the clone started by the mutation created in  $[\ell_1\Delta,(\ell_1+1)\Delta)$.

We now consider the first term inside the parenthesis in \eqref{eq:I1I2diffexpr},  and break it up based on the value of $B_{\ell_1\Delta}(\ell_2\Delta)$ and whether $A$ occurs or not.
\begin{align*}
&P(C_{\ell_2\Delta,t}^k(j)|Z(\Delta\ell_2)=m,Z(\Delta \ell_1)=n,C_{\ell_1\Delta,t}^k(j))\\
&=
\sum_{i=1}^mP(C_{\ell_2\Delta,t}^k(j),B_{\ell_1\Delta}(\ell_2\Delta)=i|Z(\Delta\ell_2)=m,Z(\Delta \ell_1)=n,C_{\ell_1\Delta,t}^k(j))\\
&=
\sum_{i=1}^mP(C_{\ell_2\Delta,t}^k(j),B_{\ell_1\Delta}(\ell_2\Delta)=i,A|Z(\Delta\ell_2)=m,Z(\Delta \ell_1)=n,C_{\ell_1\Delta,t}^k(j))\\
&\quad+
\sum_{i=1}^mP(C_{\ell_2\Delta,t}^k(j),B_{\ell_1\Delta}(\ell_2\Delta)=i,A^c|Z(\Delta\ell_2)=m,Z(\Delta \ell_1)=n,C_{\ell_1\Delta,t}^k(j)).
\end{align*}
Note that
\begin{align*}
   & P(C_{\ell_2\Delta,t}^k(j),B_{\ell_1\Delta}(\ell_2\Delta)=i,A|Z(\Delta\ell_2)=m,Z(\Delta \ell_1)=n,C_{\ell_1\Delta,t}^k(j)) \\
&= P(C_{\ell_2\Delta,t}^k(j),A|Z(\Delta\ell_2)=m,Z(\Delta \ell_1)=n,C_{\ell_1\Delta,t}^k(j),B_{\ell_1\Delta}(\ell_2\Delta)=i) \\
& \quad\; \cdot P(B_{\ell_1\Delta}(\ell_2\Delta)=i|Z(\Delta\ell_2)=m,Z(\Delta \ell_1)=n,C_{\ell_1\Delta,t}^k(j)) \\
&= P(C_{\ell_2\Delta,t}^k(j),A,D^j_{\ell_1\Delta}(t)\geq k|Z(\Delta\ell_2)=m,Z(\Delta \ell_1)=n,X_{\ell_1,\Delta}=1,B_{\ell_1\Delta}(\ell_2\Delta)=i) \\
&\quad\; \cdot \frac{P(B_{\ell_1\Delta}(\ell_2\Delta)=i|Z(\Delta\ell_2)=m,Z(\Delta \ell_1)=n,C_{\ell_1\Delta,t}^k(j))}{P(D^j_{\ell_1\Delta}(t)\geq k|Z(\Delta\ell_2)=m,Z(\Delta \ell_1)=n,X_{\ell_1,\Delta}=1,B_{\ell_1\Delta}(\ell_2\Delta)=i)} \\ 
&\leq P(C_{\ell_2\Delta,t}^k(j),A|Z(\Delta\ell_2)=m,Z(\Delta \ell_1)=n,X_{\ell_1,\Delta}=1,B_{\ell_1\Delta}(\ell_2\Delta)=i) \\
&\quad\; \cdot \frac{P(B_{\ell_1\Delta}(\ell_2\Delta)=i|Z(\Delta\ell_2)=m,Z(\Delta \ell_1)=n,C_{\ell_1\Delta,t}^k(j))}{P(D^j_{\ell_1\Delta}(t)\geq k|Z(\Delta\ell_2)=m,Z(\Delta \ell_1)=n,X_{\ell_1,\Delta}=1,B_{\ell_1\Delta}(\ell_2\Delta)=i)}, \\
\end{align*}
and
\begin{align*}
    & P(C_{\ell_2\Delta,t}^k(j),A|Z(\Delta\ell_2)=m,Z(\Delta \ell_1)=n,X_{\ell_1,\Delta}=1,B_{\ell_1\Delta}(\ell_2\Delta)=i)
    = i \nu \Delta \pabbrevvv. 
\end{align*}
Also note that
\begin{align*}
    & P(C_{\ell_2\Delta,t}^k(j),B_{\ell_1\Delta}(\ell_2\Delta)=i,A^c|Z(\Delta\ell_2)=m,Z(\Delta \ell_1)=n,C_{\ell_1\Delta,t}^k(j)) \\
    &= P(C_{\ell_2\Delta,t}^k(j),A^c|Z(\Delta\ell_2)=m,Z(\Delta \ell_1)=n,C_{\ell_1\Delta,t}^k(j),B_{\ell_1\Delta}(\ell_2\Delta)=i) \\
    &\quad\cdot P(B_{\ell_1\Delta}(\ell_2\Delta)=i|Z(\Delta\ell_2)=m,Z(\Delta \ell_1)=n,C_{\ell_1\Delta,t}^k(j)) \\
    &= (m-i)\nu\Delta \pabbrevvv 
    P(B_{\ell_1\Delta}(\ell_2\Delta)=i|Z(\Delta\ell_2)=m,Z(\Delta \ell_1)=n,C_{\ell_1\Delta,t}^k(j)).
\end{align*}
It follows that
\begin{align*}
&P(C_{\ell_2\Delta,t}^k(j)|Z(\Delta\ell_2)=m,Z(\Delta \ell_1)=n,C_{\ell_1\Delta,t}^k(j)) \\
&\leq
 \nu \Delta  \pabbrevvv  \\
 &\quad\cdot
 \left(\sum_{i=1}^mi\frac{P(B_{\ell_1\Delta}(\ell_2\Delta)=i|Z(\Delta\ell_2)=m,Z(\Delta \ell_1)=n,C_{\ell_1\Delta,t}^k(j))}{P(D^j_{\ell_1\Delta}(t)\geq k|Z(\Delta\ell_2)=m,Z(\Delta \ell_1)=n,X_{\ell_1,\Delta} = 1,B_{\ell_1\Delta}(\ell_2\Delta)=i)} \right. \\
 &\quad\quad\;\;+
\left.
\sum_{i=1}^m(m-i) P(B_{\ell_1\Delta}(\ell_2\Delta)=i|Z(\Delta\ell_2)=m,Z(\Delta \ell_1)=n,C_{\ell_1\Delta,t}^k(j))\right) \\
&\leq
\nu 
\Delta  \pabbrevvv  \\
&\quad\cdot\left(m+
\sum_{i=1}^m i \frac{P(B_{\ell_1\Delta}(\ell_2\Delta)=i|Z(\Delta\ell_2)=m,Z(\Delta \ell_1)=n,C_{\ell_1\Delta,t}^k(j))}{P(D^j_{\ell_1\Delta}(t)\geq k|Z(\Delta\ell_2)=m,Z(\Delta \ell_1)=n,X_{\ell_1,\Delta} = 1,B_{\ell_1\Delta}(\ell_2\Delta)=i)}\right).
\end{align*}
Going back to \eqref{eq:I1I2diffexpr}, we can then derive the upper bound
\begin{align*}
&I_1(\ell_1,\ell_2)-I_2(\ell_1,\ell_2) \\
&\leq \nu\Delta  \pabbrevvv  \sum_{n=1}^\infty P(C_{\ell_1\Delta,t}^k(j),Z(\Delta \ell_1)=n) \\
&\quad\cdot \sum_{m=1}^\infty P(Z(\Delta\ell_2)=m|C_{\ell_1\Delta,t}^k(j),Z(\Delta \ell_1)=n)\\
&\quad\cdot
\sum_{i=1}^m i \frac{P(B_{\ell_1\Delta}(\ell_2\Delta)=i|Z(\Delta\ell_2)=m,Z(\Delta \ell_1)=n,C_{\ell_1\Delta,t}^k(j))}{P(D^j_{\ell_1\Delta}(t)\geq k|Z(\Delta\ell_2)=m,Z(\Delta \ell_1)=n,X_{\ell_1,\Delta} = 1,B_{\ell_1\Delta}(\ell_2\Delta)=i)}.
\end{align*}
Note that 
\begin{align*}
& P(Z(\Delta\ell_2)=m|C_{\ell_1\Delta,t}^k(j),Z(\Delta \ell_1)=n) \\
&\cdot P(B_{\ell_1\Delta}(\ell_2\Delta)=i|Z(\Delta\ell_2)=m,Z(\Delta \ell_1)=n,C_{\ell_1\Delta,t}^k(j))\\
&= \frac{P(B_{\ell_1\Delta}(\ell_2\Delta)=i,Z(\Delta\ell_2)=m,Z(\Delta \ell_1)=n,C_{\ell_1\Delta,t}^k(j))}{ P(C_{\ell_1\Delta,t}^k(j),Z(\Delta \ell_1)=n)}
\end{align*}
and
\begin{align*}
& \frac{P(B_{\ell_1\Delta}(\ell_2\Delta)=i,Z(\Delta\ell_2)=m,Z(\Delta \ell_1)=n,C_{\ell_1\Delta,t}^k(j))}{P(D^j_{\ell_1\Delta}(t)\geq k|Z(\Delta\ell_2)=m,Z(\Delta \ell_1)=n,X_{\ell_1,\Delta} = 1,B_{\ell_1\Delta}(\ell_2\Delta)=i)} \\
&= \frac{P(B_{\ell_1\Delta}(\ell_2\Delta)=i,Z(\Delta\ell_2)=m,Z(\Delta \ell_1)=n,X_{\ell_1,\Delta}=1,D^j_{\ell_1\Delta}(t)\geq k)}{P(D^j_{\ell_1\Delta}(t)\geq k|Z(\Delta\ell_2)=m,Z(\Delta \ell_1)=n,X_{\ell_1,\Delta} = 1,B_{\ell_1\Delta}(\ell_2\Delta)=i)} \\
&= P(Z(\Delta\ell_2)=m,Z(\Delta \ell_1)=n,X_{\ell_1,\Delta} = 1,B_{\ell_1\Delta}(\Delta\ell_2)=i).
\end{align*}
It follows that
\begin{align*}
&  P(Z(\Delta\ell_2)=m|C_{\ell_1\Delta,t}^k(j),Z(\Delta \ell_1)=n) \\
& \cdot \frac{P(B_{\ell_1\Delta}(\ell_2\Delta)=i|Z(\Delta\ell_2)=m,Z(\Delta \ell_1)=n,C_{\ell_1\Delta,t}^k(j))}{P(D^j_{\ell_1\Delta}(t)\geq k|Z(\Delta\ell_2)=m,Z(\Delta \ell_1)=n,X_{\ell_1,\Delta} = 1,B_{\ell_1\Delta}(\ell_2\Delta)=i)} \\
&= \frac{P(Z(\Delta\ell_2)=m,Z(\Delta \ell_1)=n,X_{\ell_1,\Delta} = 1,B_{\ell_1\Delta}(\Delta\ell_2)=i)}{P(C_{\ell_1\Delta,t}^k(j),Z(\Delta \ell_1)=n)},
\end{align*}
which allows us to write
\begin{align*}
&I_1(\ell_1,\ell_2)-I_2(\ell_1,\ell_2) \\
&\leq\Delta\nu  \pabbrevvv  \sum_{n=1}^\infty  \sum_{m=1}^\infty \sum_{i=1}^m i  P(Z(\Delta\ell_2)=m,Z(\Delta \ell_1)=n,X_{\ell_1,\Delta} = 1,B_{\ell_1\Delta}(\Delta\ell_2)=i).
\end{align*}
Now,
\begin{align*}
& P(Z(\Delta\ell_2)=m,Z(\Delta \ell_1)=n,X_{\ell_1,\Delta} = 1,B_{\ell_1\Delta}(\Delta\ell_2)=i) \\
&= P(Z(\Delta\ell_2)=m, B_{\ell_1\Delta}(\Delta\ell_2)=i| Z(\Delta \ell_1)=n,X_{\ell_1,\Delta} = 1) \\
&\quad\cdot P(X_{\ell_1,\Delta}=1|Z(\Delta \ell_1)=n) P(Z(\Delta \ell_1)=n) \\
&= n\nu\Delta P(Z(\Delta \ell_1)=n) p_i(\Delta (\ell_2-\ell_1)) p_{n-1,m-i}(\Delta(\ell_2-\ell_1)),
\end{align*}
where we recall that $p_{n,m}(t) = P(Z(t)=m|Z(0)=n)$ and $p_{m}(t) = p_{1,m}(t)$.
It follows that there exists a constant $C>0$ so that
\begin{align*}
&I_1(\ell_1,\ell_2)-I_2(\ell_1,\ell_2) \\
&\leq \nu^2\Delta^2  \pabbrevvv  \sum_{n=1}^\infty  nP(Z(\Delta \ell_1)=n)  \sum_{m=1}^\infty \sum_{i=1}^m i p_{n-1,m-i}(\Delta(\ell_2-\ell_1)) p_i(\Delta (\ell_2-\ell_1)) \\
&=
\nu^2\Delta^2  \pabbrevvv  \sum_{n=1}^\infty nP(Z(\Delta \ell_1)=n)\sum_{i=1}^\infty  ip_i(\Delta (\ell_2-\ell_1)) \sum_{m=i}^\infty p_{n-1,m-i}(\Delta(\ell_2-\ell_1))\\
&=
\nu^2\Delta^2  \pabbrevvv  \sum_{n=1}^\infty nP(Z(\Delta \ell_1)=n)\sum_{i=1}^\infty  ip_i(\Delta (\ell_2-\ell_1)) \\
&= \nu^2\Delta^2  \pabbrevvv  e^{\lambda \Delta \ell_2} \leq
C \Delta^2 \theta^k e^{\lambda \Delta \ell_2},
\end{align*}
where in the last step, we use that $ \pabbrevvv  = p_{j,+}^k(t-\Delta\ell_2) + O(\Delta)$ and \eqref{eq:pkbound}.
\end{proof}

\subsection{Proof of Proposition \ref{thm:passageApprox}} \label{app:approxtime}

\begin{proof}
Define
\begin{align*}
    \Omega_\infty^\ast := \left\{ \omega \in  \Omega_\infty   : e^{-\lambda t}Z(t,\omega) \to Y(\omega), Y(\omega)>0\right\}
\end{align*}
It suffices to prove that
\[
\lim_{N \to \infty} |\tau_N-t_N| =0
\]
on the set $\Omega_\infty^\ast$.
Let $\delta>0$ and choose $\varepsilon>0$ small enough so that $e^{\lambda \delta}(1-\varepsilon)>1$ and $e^{-\lambda\delta}(1+\varepsilon)<1$.
On $\Omega_\infty^\ast$, we can find $T = T(\omega)$ such that for all $t \geq T$,
\begin{align*}
    (1-\varepsilon)Y \leq e^{-\lambda t}Z(t) \leq (1+\varepsilon)Y.
\end{align*}
Recall that for $N \geq 1$, $Ye^{\lambda t_N} = N$ i.e.~$t_N = \log(N/Y)/\lambda$ by the definition of $t_N$ in \eqref{eq:tNdef}.
Since $t_N \uparrow \infty$ as $N \to \infty$, we can choose $N = N(\omega)$ such that $t_n \geq t_N > T+\delta$ for all $n \geq N$ and $Z(t) < N$ for all $t < T$.
Then, for $n \geq N$, 
where we use that $Y e^{\lambda t_n} = n$,
\begin{align*}
    Z(t_n+\delta) \geq e^{\lambda(t_n+\delta)}(1-\varepsilon)Y = e^{\lambda \delta}(1-\varepsilon)n > n,
\end{align*}
which implies that $\tau_n \leq t_n+\delta$. Furthermore, for all $n \geq N$ and $T \leq s \leq t_n-\delta$,
\begin{align*}
    Z(s) \leq e^{\lambda s}(1+\varepsilon)Y \leq e^{\lambda(t_n-\delta)}(1+\varepsilon)Y = e^{-\lambda\delta}(1+\varepsilon)n < n,
\end{align*}
which together with $Z(t)<N$ for all $t < T$ implies that $Z(s) < n$ for all $s \leq t_n-\delta$, i.e.~$\tau_n \geq t_n-\delta$. Since $t_n-\delta \leq \tau_n \leq t_n+\delta$ for all $n \geq N$, we can conclude that
\begin{align*}
     \limsup_{n \to \infty} |t_n(\omega)-\tau_n(\omega)| \leq \delta
\end{align*}
for each $\omega \in \Omega_\infty^\ast$. Since $\delta>0$ is arbitrary, we get the result.
\end{proof}

\subsection{Proof of Proposition \ref{thm:totalnummutresult}} \label{app:totalmutgeneral}
\begin{proof}
We use a similar argument to the proof of
    Theorem \ref{thm:mainresult}. First, we break the total number of mutations $M(t)$ into
    \begin{align*}
        M(t)  = M_+(t) - M_-(t),
    \end{align*}
    where $M_+(t)$ represents the total number of mutations generated up until time $t$, and $M_-(t)$ represents the number of mutations which belong to $M_+(t)$ but die out before time $t$. Obviously, these two processes are increasing in time. 
    The limit theorems for $M(t)$ will follow from limit theorems for $M_+(t)$ and $M_-(t)$. Because of the almost identical arguments, we will focus on the analysis of $M_+(t)$.

    As in the proof of Theorem \ref{thm:mainresult}, we define the approximations
    \begin{align}\label{eq:M_approx1}
        \hat{M}_+(t) :=\nu \int_0^t Ye^{\lambda s}ds
    \end{align}
    and 
    \begin{align}\label{eq:M_approx2}
        \bar{M}_+(t) := \nu\int_0^t Z(s)ds,
    \end{align}
    as well as the Riemann sum approximation
    \begin{align}\label{eq:M_approx3}
        \bar{M}_{+,\Delta}(t) := \nu \Delta \sum_{\ell=0}^{\lfloor t/\Delta\rfloor} Z(\ell\Delta) .
    \end{align}
    Note that the only difference between \eqref{eq:approx2} and \eqref{eq:M_approx1} is the probability $p_{j,+}^k(t-s)$ which does not appear in (\ref{eq:M_approx1}).
    Therefore, we can simply follow the proofs of Lemmas \ref{lemma:approx1} and \ref{lemma:approx2} by replacing  $S^k_{j,+}(t)$, $\hat{S}^k_{j,+}(t)$, $\bar{S}^k_{j,+}(t)$, $\bar{S}^k_{j,+,\Delta}(t)$ and $\theta$ with $M_+(t)$, $\hat{M}_+(t)$, $\bar{M}_+(t)$, $\bar{M}_{+,\Delta}(t)$ and $1$, respectively, and we will get 
\label{prop:M_combinedapprox}
\begin{align}
   E\big|M_+(t)- \hat{M}_+(t) \big| = O( t^{1/2} e^{\lambda t/2}),
\end{align}
which implies
\begin{align} \label{eq:M_finiteintegral}
 \int_0^{\infty} e^{-\lambda t} E\big|M_+(t)-\hat{M}_+(t)\big|dt < \infty.
\end{align}
Note that $\lim\limits_{t\to\infty}e^{-\lambda t}\hat{M}_+(t) =\nu Y /\lambda $ exists and $M_+(t)$ is an increasing process. By replacing the corresponding terms in the proof of Proposition \ref{prop:Sj+(t)convergence}, we can get
\begin{align}\label{M+convergence}
    \lim_{t\to\infty} e^{-\lambda t} M_+(t) =\nu Y \int_0^{\infty} e^{-\lambda s}ds = \nu Y /\lambda,
\end{align}
almost surely.
Similarly, 
\begin{align}\label{M-convergence}
    \lim_{t\to\infty} e^{-\lambda t} M_-(t) = \nu Y \int_0^{\infty} e^{-\lambda s}p_0(s)ds,
\end{align}
almost surely.
The fixed-time result \eqref{eq:totalmutfixedtime} follows immediately from (\ref{M+convergence}) and (\ref{M-convergence}).

Then, by following the proof in Section \ref{sec:fixedsizeproof}, we can get the fixed-size result \eqref{eq:totalmutfixedsize} for the total number of mutations,
\begin{align*}
    \lim_{N \to \infty} N^{-1} M(\tau_N) = \nu \int_0^\infty e^{-\lambda s} (1-p_0(s)) ds,
\end{align*}
almost surely.
\end{proof}

\subsection{Proof of Corollary \ref{corollary}} \label{app:bdsimplification}

\begin{proof}
        \begin{enumerate}[(1)]
        \item
    For the birth-death process,
we can write
\begin{align} \label{eq:sizedisgeneral}
\begin{split}
    p_0(t) &= \frac{p(e^{\lambda t}-1)}{e^{\lambda t}-p},  \\
    p_j(t) &= \frac{q^2 e^{\lambda t}}{(e^{\lambda t}-p)^2} \cdot \left(\frac{e^{\lambda t}-1}{e^{\lambda t}-p}\right)^{j-1}, \quad j \geq 1,
\end{split}
\end{align}
see expression (B.1) in \cite{gunnarsson2021exact}.
Therefore, for $j \geq 1$,
\begin{linenomath*}
\begin{align*}
& \int_0^{\infty} e^{-\lambda s} p_j(s) ds = \frac1{\lambda} \int_0^{\infty} \frac{q^2 e^{-\lambda s}}{(1-p e^{-\lambda s})^2}\cdot \left(\frac{1-e^{-\lambda s}}{1-p e^{-\lambda {s}}}\right)^{j-1}  \cdot \lambda e^{-\lambda s} ds.
\end{align*}
\end{linenomath*}
Using the substitution $x := e^{-\lambda s}$, $dx = -\lambda e^{-\lambda s}ds$, we obtain
\[
   \int_0^{\infty} e^{-\lambda s} p_j(s) ds = \frac{q^2}{\lambda}  \int_0^{1} \frac{x}{(1-p x)^2}\cdot \left(\frac{1-x}{1-p x}\right)^{j-1}  dx.
\]
We again change variables, this time $y := (1-x)/(1-p x)$, in which case
\begin{linenomath*}
\begin{align*}
\begin{array}{ll}
     &x = (1-y)/(1-p y), \\
     & dx= -\big(q/(1-p y)^2\big)dy, \\
    &1-p x = q/(1-p y).
\end{array}
\end{align*}
\end{linenomath*}
In addition, $y = 1$ for $x=0$ and $y=0$ for $x=1$, which implies
\begin{align} \label{eq:bdintegral}
    & \int_0^{\infty} e^{-\lambda s} p_j(s) ds = \frac{q}{\lambda}  \int_0^{1} (1-p y)^{-1} (1-y) y^{j-1} dy.
\end{align}
To get the sum representation in \eqref{eq:fixedtimemainresultbd}, it suffices to note that
\begin{linenomath*}
\begin{align*}
   \int_0^{1} (1-py)^{-1} (1-y) y^{j-1} dy &=  \sum_{k=0}^\infty p^k \left( \int_0^{1} (1-y) y^{j+k-1} dy\right) \\
   &=  \sum_{k=0}^\infty \frac{p^k}{(j+k)(j+k+1)}.
\end{align*}
\end{linenomath*}
To get the pure-birth process result, it suffices to note that $p=0$, $q=1$ and
\begin{align*}
    \int_0^1 (1-y) y^{j-1}dy = \frac1{j(j+1)}.
\end{align*}
\item Follows from the same calculations as in (1).
\item By \eqref{eq:sizedisgeneral}, for the birth-death process,
\begin{align*}
    1-p_0(t) = \frac{(1-p)e^{\lambda t}}{e^{\lambda t}-p} = \frac{qe^{\lambda t}}{e^{\lambda t}-p}.
\end{align*}
Therefore,
\begin{linenomath*}
\begin{align*}
& \int_0^{\infty} e^{-\lambda s} (1-p_0(s)) ds = \frac1{\lambda} \int_0^{\infty} \frac{q}{1-pe^{-\lambda s}} \cdot \lambda e^{-\lambda s} ds.
\end{align*}
\end{linenomath*}
Using the substitution $x := e^{-\lambda s}$, $dx = -\lambda e^{-\lambda s}ds$, we obtain
\begin{align} \label{eq:totalmutintegralbd}
\int_0^{\infty} e^{-\lambda s} (1-p_0(s)) ds = \dfrac1{\lambda} \int_0^{1} \frac{q}{1-p x} dx 
= \begin{cases} \dfrac1\lambda, & p=0, \\ - \dfrac{q \log(q)}{\lambda p}, & 0<p<1. \end{cases}
\end{align}
\item Follows from the same calculations as in (3).
\end{enumerate}
\end{proof}

\subsection{Derivation of expression \eqref{eq:proportionofmutbdmorethanj}} \label{app:propfoundinmroethanjcells}

By writing $M_j(t) = M(t) - \sum_{k=0}^{j-1} S_k(t)$, it follows from Corollary \ref{corollary} that conditional on $\Omega_\infty$,
\begin{align*}
    \lim_{t \to \infty} e^{-\lambda t} M_j(t) &= \frac{\nu q Y}{\lambda} \int_0^1 (1-py)^{-1} (1-y) \sum_{k=j}^{\infty} y^{k-1} dy \\
    &= \frac{\nu q Y}{\lambda} \int_0^1 (1-py)^{-1} y^{j-1} dy.
\end{align*}
Similarly,
\begin{align*}
    \lim_{N \to \infty} N^{-1} M_j(\tau_N) 
    &= \frac{\nu q}{\lambda} \int_0^1 (1-py)^{-1} y^{j-1} dy.
\end{align*}
It follows that
\begin{align*}
    \lim_{t \to \infty} \frac{S_j(t)}{M_j(t)} = \lim_{N \to \infty} \frac{S_j(\tau_N)}{M_j(\tau_N)} 
    & = \frac{ \int_0^{1} (1-p y)^{-1} (1-y) y^{j-1}  dy}{\int_0^1 (1-py)^{-1} y^{j-1} dy} \\
    &= 1-\frac{ \int_0^{1} (1-p y)^{-1} y^{j}  dy}{\int_0^1 (1-py)^{-1} y^{j-1} dy} =: \varphi_j(p).
\end{align*}

\subsection{Proof that $\varphi_j(p)$ is strictly decreasing} \label{app:varphidecreasing}

Here, we show that for each $j \geq 1$,  $\varphi_j(p)$ given by the last expression in Section \ref{app:propfoundinmroethanjcells} is strictly decreasing in $p$.
Set
\begin{align*}
    & a := \left( \int_0^{1} (1-p y)^{-2} y^{j+1}  dy\right)\left(\int_0^1 (1-py)^{-1} y^{j-1} dy\right), \\
    & b := \left(\int_0^1 (1-py)^{-2} y^{j} dy\right) \left(\int_0^{1} (1-p y)^{-1} y^{j}  dy\right).
\end{align*}
It suffices to show that $a>b$ for each $p \in (0,1)$.
First, note that we can write
\begin{align*}
    & 
    a = \int_0^{1} \int_0^{1} (1-p y)^{-2} y^{j+1} (1-px)^{-1} x^{j-1} dy dx \\
\end{align*}
and
\begin{align*}
    & 
    b = \int_0^{1} \int_0^{1} (1-py)^{-2} y^{j} (1-px)^{-1} x^{j} dy dx,
\end{align*}
which implies
\begin{align*}
    a-b &= \int_0^1 \int_0^1 (1-py)^{-2}(1-px)^{-1}y^jx^{j-1}(y-x)dydx \\
    &= \int_0^1 \int_0^x (1-py)^{-2}(1-px)^{-1}y^jx^{j-1}(y-x)dydx \\
    &\quad+ \int_0^1 \int_x^1 (1-py)^{-2}(1-px)^{-1}y^jx^{j-1}(y-x)dydx.
\end{align*}
The latter integral can be rewritten as follows:
\begin{align*}
    & \int_0^1 \int_x^1 (1-py)^{-2}(1-px)^{-1}y^jx^{j-1}(y-x)dydx \\
    &= \int_0^1 \int_0^y (1-py)^{-2}(1-px)^{-1}y^jx^{j-1}(y-x)dxdy \\
    &= -\int_0^1 \int_0^x (1-px)^{-2}(1-py)^{-1}x^jy^{j-1}(y-x)dydx
\end{align*}
which implies
\begin{align*}
    a-b = \int_0^1 \int_0^x (1-py)^{-1}(1-px)^{-1}y^{j-1}x^{j-1}(y-x)\big((1-py)^{-1}y-(1-px)^{-1}x\big)dydx.
\end{align*}
Since
\begin{align*}
    \frac{y}{1-py} - \frac{x}{1-px} = \frac{y-x}{(1-py)(1-px)},
\end{align*}
we can finally conclude that
\begin{align*}
    a-b = \int_0^1 \int_0^x (1-py)^{-2}(1-px)^{-2}y^{j-1}x^{j-1}(y-x)^2dydx > 0
\end{align*}
for each $p \in (0,1)$.

\subsection{Derivation of expression \eqref{eq:varphiexpr}} \label{app:estimator}

To derive expression \eqref{eq:varphiexpr} in the main text, we note that $(1-py)^{-1} = \sum_{k=0}^\infty (py)^k$ for $0 < p < 1$ and $0 \leq y \leq 1$, which implies
\begin{align*}
      \int_0^{1} (1-p y)^{-1} (1-y)  dy &= \sum_{k=0}^\infty p^k \int_0^1 y^k(1-y) dy \\
    &=  \sum_{k=0}^\infty \frac{p^k}{k+1} -  \sum_{k=0}^\infty \frac{p^k}{k+2}.
\end{align*}
Since $\sum_{k=1}^\infty \frac{x^k}{k} = -\log(1-x)$, we obtain
\begin{align*}
     \int_0^{1} (1-p y)^{-1} (1-y)  dy 
    &= - \frac{\log(q)}p - \frac{1}{p^2} \big(-\log(q) - p\big) \\
    &= \frac{q}{p^2} \log(q) + \frac{1}{p}.
\end{align*}
Therefore, applying expression \eqref{eq:proportionofmutbd}, we can write for $0<p<1$,
\begin{align*}
&\varphi_1(p) = -\frac{p}{\log(q)} \int_0^{1} (1-p y)^{-1} (1-y)  dy 
= 
-\frac{p+q\log(q)}{p\log(q)}.
\end{align*}

\section*{Acknowledgments}
The authors are indebted to an anonymous reviewer of a previous version of this manuscript, who made several valuable comments and suggested to us the proof of Proposition 1.
EBG was supported in part by NSF grant CMMI-1552764, NIH grant R01 CA241137, funds from the Norwegian Centennial Chair grant and the Doctoral Dissertation Fellowship from the University of Minnesota. K. Leder was supported in part with funds from NSF award CMMI 2228034
and Research Council of Norway Grant 309273.

\section*{Competing Interests Statement}
The authors have no competing interests to declare.

\bibliographystyle{ieeetr}
\bibliography{epi}

\end{document}